\documentclass{amsart}


\usepackage{amsmath}
\usepackage{amsfonts}
\usepackage{amssymb,enumerate}
\usepackage{amsthm}
\usepackage[all]{xy}


\theoremstyle{plain}
\newtheorem{lem}{Lemma}[section]
\newtheorem{cor}[lem]{Corollary}

\newtheorem{thm}[lem]{Theorem}

\theoremstyle{definition}
\newtheorem{defn}[lem]{Definition}
\newtheorem{ex}[lem]{Example}
\newtheorem{question}[lem]{Question}

\newtheorem{disc}[lem]{Remark}

\newtheorem{fact}[lem]{Fact}

\newtheorem{remarkdefinition}[lem]{Remark/Definition}





\newcommand{\pd}{\operatorname{pd}}


\newcommand{\rank}{\operatorname{rank}}	

\newcommand{\edim}{\operatorname{edim}}

\newcommand{\len}{\operatorname{len}}


\newcommand{\ext}{\operatorname{Ext}}

\newcommand{\Hom}{\operatorname{Hom}}	

\newcommand{\spec}{\operatorname{Spec}}
\newcommand{\s}{\mathfrak{S}}

\newcommand{\im}{\operatorname{Im}}

\newcommand{\Ker}{\operatorname{Ker}}


\newcommand{\ideal}[1]{\mathfrak{#1}}
\newcommand{\m}{\ideal{m}}
\newcommand{\n}{\ideal{n}}
\newcommand{\p}{\ideal{p}}

\newcommand{\fr}{\ideal{r}}


\newcommand{\comp}[1]{\widehat{#1}}

\newcommand{\wti}{\widetilde}


\newcommand{\ass}{\operatorname{Ass}}

\newcommand{\Min}{\operatorname{Min}}


\newcommand{\bbq}{\mathbb{Q}}


\newcommand{\xra}{\xrightarrow}


\newcommand{\vf}{\varphi}


\newcommand{\y}{\mathbf{y}}

\newcommand{\x}{\mathbf{x}}


\renewcommand{\geq}{\geqslant}
\renewcommand{\leq}{\leqslant}
\renewcommand{\ker}{\Ker}
\renewcommand{\hom}{\Hom}


\newcommand{\Ext}[4][R]{\operatorname{Ext}_{#1}^{#2}(#3,#4)}

\newcommand{\Otimes}[3][R]{#2\otimes_{#1}#3}
\renewcommand{\Hom}[3][R]{\operatorname{Hom}_{#1}(#2,#3)}	
\newcommand{\Tor}[4][R]{\operatorname{Tor}^{#1}_{#2}(#3,#4)}

\newcommand{\cd}{C^{\dagger}}

\numberwithin{equation}{lem}

\begin{document}

\bibliographystyle{amsplain}

\author{Susan M.\ Cooper}

\address{Susan M.\ Cooper,
Department of Mathematics, Pearce Hall 214, Central Michigan University, Mount Pleasant, MI 48859
USA}

\email{s.cooper@cmich.edu}

\urladdr{http://www.math.unl.edu/\~{}scooper4/}

\author{Sean Sather-Wagstaff}

\address{Sean Sather-Wagstaff, Department of Mathematics, 
North Dakota State University, Dept. \# 2750,
PO Box 6050,
Fargo, ND 58108-6050
USA}

\email{Sean.Sather-Wagstaff@ndsu.edu}

\urladdr{http://www.ndsu.edu/pubweb/\~{}ssatherw/}

\thanks{Sean Sather-Wagstaff was supported in part by a grant from the NSA}

\title{Multiplicities  of semidualizing modules}

\date{\today}


\keywords{Betti numbers, fat point schemes, canonical modules,
dualizing modules, monomial ideals, 
Hilbert-Samuel multiplicities, semidualizing modules}
\subjclass[2010]{13C14, 13H15}

\begin{abstract}
A finitely generated module $C$
over a commutative noetherian ring $R$ is semidualizing
if $\Hom CC\cong R$ and $\Ext iCC =0$ for all $i\geq 1$.
For certain local Cohen-Macaulay rings $(R,\m)$, we verify the equality 
of Hilbert-Samuel multiplicities $e_R(J;C)=e_R(J;R)$
for all semidualizing $R$-modules $C$ and all $\m$-primary ideals $J$.
The classes of rings we investigate include 
those that are determined
by ideals defining fat point schemes in projective space or 
by monomial ideals.
\end{abstract}

\maketitle

\section{Introduction} \label{sec1}

In this section, let $(R,\m,k)$ be a Cohen-Macaulay local ring with a dualizing module $D$.
A finitely generated $R$-module $C$ is \emph{semidualizing}
if $\Hom CC\cong R$ and $\Ext iCC =0$ for all $i\geq 1$.
Thus, the module $D$ is precisely a semidualizing module of finite injective dimension.
Let $\s_0(R)$ denote the set of isomorphism classes of semidualizing $R$-modules.
(See Section~\ref{sec2} for definitions and background information.)
For example, the $R$-modules $R$ and $D$ are semidualizing. 
The ring $R$ is Gorenstein if and only if $D\cong R$, equivalently,
if and only if $\s_0(R)=\{[R]\}$.

In this paper, we investigate the following question, motivated by the well-known 
equality $e_R(J;D)=e_R(J;R)$:

\begin{question} \label{q101}
Let $C$ be a semidualzing $R$-module.
For each $\m$-primary ideal $J$, must we have an equality of Hilbert-Samuel
multiplicities $e_R(J;C)=e_R(J;R)$?
\end{question}

When $R$ is generically Gorenstein (e.g., reduced)
an affirmative answer to this question is contained in~\cite[(2.8(a))]{sather:divisor}. 
In Theorems~\ref{thm201} and~\ref{lem204} we address a few more  cases with the following:

\begin{thm} \label{thm102}
Assume that $R$ 
satisfies one of the following conditions:
\begin{enumerate}[\quad\rm(1)]
\item  \label{thm102b}
$P^2R_P=0$
for each $P\in\ass(R)$;
\item  \label{thm102c}
$\comp R\cong k[\![X_0,X_1,\cdots,X_n]\!]/Ik[\![X_0,X_1,\cdots,X_n]\!]$ 
where $I\subseteq k[X_0,X_1,\cdots,X_n]$ is the ideal
determining
a fat point scheme in $\mathbb{P}^n_k$; or
\item  \label{thm102d}
$\comp R\cong k[\![X_1,\ldots,X_n]\!]/I$ 
where $I$
is generated by monomials in the $X_i$.
\end{enumerate}
For every $\m$-primary ideal
$J\subset R$ and every semidualizing $R$-module $C$, we have $e_{R}(J;C)=e_{R}(J;R)$.
\end{thm}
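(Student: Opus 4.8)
The plan is to reduce, via the associativity formula for Hilbert--Samuel multiplicities, to a statement about semidualizing modules over certain artinian local rings, and then to treat the three hypotheses separately.

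\textbf{Reduction.} Since $e_R(J;C)=e_{\comp R}(J\comp R;\comp C)$ and $e_R(J;R)=e_{\comp R}(J\comp R;\comp R)$, and $\comp C$ is semidualizing over $\comp R$, it suffices to prove the assertion with $\comp R$ in place of $R$; so under \eqref{thm102c} and \eqref{thm102d} I assume $R$ is complete, hence $R\cong k[\![\,\underline X\,]\!]/I$ as stated. Because $\Hom CC\cong R$, the module $C$ has full support, so $\dim_RC=\dim R$; and $R$ being Cohen--Macaulay, it is equidimensional with $\ass R=\Min R$. Hence for each $\m$-primary ideal $J$ the associativity formula gives
\[
e_R(J;C)=\sum_{P\in\ass R}\len_{R_P}(C_P)\,e_R(J;R/P),\qquad e_R(J;R)=\sum_{P\in\ass R}\len_{R_P}(R_P)\,e_R(J;R/P),
\]
and it is enough to show $\len_{R_P}(C_P)=\len_{R_P}(R_P)$ for every $P\in\ass R$. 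Fix such $P$, set $S=R_P$ and $B=C_P$; then $S$ is artinian local, $B$ is a semidualizing $S$-module, and the goal is $\len_SB=\len_SS$. Each hypothesis constrains $S$: under \eqref{thm102b}, $P^2R_P=0$ says $\m_S^2=0$; under \eqref{thm102c}, localizing $I=\bigcap_i\p_i^{m_i}$ at the minimal prime $\p_i$ inverts every other primary component, so $S\cong A/\m_A^{m_i}$ with $A$ regular local of dimension $n$; and under \eqref{thm102d}, the minimal primes of a monomial ideal are monomial and localizing a monomial ideal at such a prime again yields a monomial ideal, so $S\cong\kappa[\![Y_1,\dots,Y_r]\!]/Q$ with $Q$ a monomial ideal primary to $(Y_1,\dots,Y_r)$.

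\textbf{Case \eqref{thm102b}.} A cyclic semidualizing module $B=S/\fa$ is free, since then $S\cong\Hom[S]BB\cong S/\fa$ forces $\fa=0$; so $\mu_S(B)=1$ gives $B\cong S$. Dually $\da B=\Hom[S]{B}{D_S}$ is semidualizing with $\mu_S(\da B)=\type_S(B)$ (Matlis duality over the artinian ring $S$ swaps minimal number of generators and type), so $\type_S(B)=1$ gives $\da B\cong S$ and hence $B\cong\da{(\da B)}\cong D_S$; in either case $\len_SB=\len_SS$ (for $B\cong D_S$, again by Matlis duality). So the plan is to assume $\mu_S(B)=a\ge2$ and $\type_S(B)=t\ge2$ and reach a contradiction. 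From the minimal presentation $0\to\Omega\to S^a\to B\to0$ one has $\Omega\cong(S/\m_S)^{\beta}$, as $\Omega\subseteq\m_SS^a$ is killed by $\m_S$; applying $\Hom[S]{-}{B}$ and using $\Ext[S]1BB=0$ forces the image of $\Hom[S]{S^a}{B}$ in $\Hom[S]{\Omega}{B}\cong\soc(B)^{\beta}$ to be onto while lying in $(\m_SB)^{\beta}$, whence $\m_SB=\soc B$. Combining this with $\len_SS=\len_S\Hom[S]BB=a\len_SB-t\beta$ (from the same sequence), with $\beta=\len_S\Omega=a\len_SS-\len_SB$, and with the identity $\type_SS=\mu_S(B)\,\type_S(B)$ (which follows from $\Hom[S]{S/\m_S}{\Hom[S]BB}\cong\soc(B)^{\mu_S(B)}$), a short computation yields $(a-1)(t-1)=0$, contradicting $a,t\ge2$.

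\textbf{Cases \eqref{thm102c} and \eqref{thm102d}.} Under \eqref{thm102c}, the surjection $A\to A/\m_A^{m_i}$ is a Golod homomorphism, so $S$ is a Golod ring; and a semidualizing module over a Golod local ring is isomorphic to the ring or to its dualizing module, so $B\cong S$ or $B\cong D_S$, whence $\len_SB=\len_SS$. Under \eqref{thm102d}, the plan is to polarize $Q$: this produces a squarefree monomial ideal $\widetilde Q$ in a power series ring $\kappa[\![\,\underline Y,\underline Z\,]\!]$ together with a regular sequence $\underline z$ of linear forms (the depolarization relations) on $S':=\kappa[\![\,\underline Y,\underline Z\,]\!]/\widetilde Q$ with $S'/(\underline z)\cong S$. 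Then $S'$ is reduced, hence generically Gorenstein; and since $\underline z$ is a regular sequence with $\dim S'/(\underline z)=\dim S=0$, one has $\depth S'=\dim S'=|\underline z|$, so $S'$ is Cohen--Macaulay and $\underline z$ is a system of parameters. The semidualizing $S$-module $B$ lifts to a semidualizing $S'$-module $B'$ with $B'/(\underline z)B'\cong B$ --- semidualizing modules lift along surjections whose kernel is generated by a regular sequence --- and, $B'$ being maximal Cohen--Macaulay over $S'$, the parameter sequence $\underline z$ is $B'$-regular. Picking any $\m_{S'}$-primary ideal $\q\supseteq(\underline z)$ and reducing Hilbert--Samuel multiplicities modulo $\underline z$ then gives
\[
\len_SB=e_{S'}(\q;B')=e_{S'}(\q;S')=\len_SS,
\]
the middle equality being \cite[(2.8(a))]{sather:divisor} applied to the generically Gorenstein ring $S'$.

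\textbf{Expected obstacle.} The reduction and Case \eqref{thm102b} are elementary. Case \eqref{thm102d} leans on two external inputs --- the lifting of semidualizing modules along complete-intersection surjections, and the already-known generically Gorenstein case of Question~\ref{q101} --- together with routine facts about polarization and about multiplicity modulo a regular sequence, and I expect no trouble there. The real weight is in Case \eqref{thm102c}, which is entirely hostage to the (nontrivial) fact that a semidualizing module over a Golod ring must be trivial. Should one wish to avoid that input, the hard part would be to compute $\len_SB$ directly from the Golod free resolution of the residue field over $S=A/\m_A^{m_i}$, and I expect this to be the most delicate step.
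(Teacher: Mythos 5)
Your overall route is the paper's: pass to the completion (Lemma~\ref{lem205}, Fact~\ref{fact205}), use the additivity/associativity formula to reduce to showing $\len_{R_P}(C_P)=\len_{R_P}(R_P)$ for each $P\in\ass(R)$, and then analyze the artinian localizations case by case. Your Case~\eqref{thm102b} is correct and self-contained (I checked the numerics: $\m_SB=\soc B$, $\len_SS=a\ell-t\beta$, $\beta=aL-\ell$, $\type S=at$ do force $(a-1)(t-1)=0$); the paper gets the same conclusion faster from Fact~\ref{fact202}, where $\m^2=0$ makes the first syzygy a $k$-vector space and $\ext^1$-vanishing forces $C$ injective. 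Your Case~\eqref{thm102d} is essentially the paper's own argument (Lemma~\ref{lem204z}, Theorem~\ref{lem204}): polarize, lift the semidualizing module along the complete-intersection surjection using completeness (Fact~\ref{fact205}), and quote \cite[(2.8)]{sather:divisor} for the reduced ring; the one point you gloss is that in a \emph{power series} ring the minimal primes of a monomial ideal are generated by variables and localization again gives a monomial quotient --- this is exactly the point the paper felt obliged to justify (Corollary~\ref{lem204y}, via Heinzer--Mirbagheri--Ratliff--Shah), but it is true and repairable, e.g.\ by flat base change from the polynomial subring, so I count it as a gloss rather than an error.

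The genuine gap is Case~\eqref{thm102c}. After correctly reducing to $S\cong A/\m_A^{m_i}$ with $A$ regular local (as in Remark~\ref{disc299}), you dispose of it by asserting that $S$ is Golod and that \emph{every semidualizing module over a Golod local ring is isomorphic to the ring or to its dualizing module}. The Golodness of $A/\m_A^{m}$ is classical, but the second assertion is a substantial theorem, not a routine fact, and you give neither a proof nor a reference; as you yourself note, the entire case is ``hostage'' to it. As written, this step is unestablished, so Case~\eqref{thm102c} is incomplete. Note that the full Golod statement is far more than is needed: the paper's Lemma~\ref{lem202} handles exactly these rings by completing to $k[\![X_1,\ldots,X_n]\!]/(X_1,\ldots,X_n)^e$ and citing \cite[(4.11)]{sather:divisor}, which classifies the semidualizing modules there as $R$ and $D$. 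If you can supply a precise citation (or proof) for the Golod claim, your argument goes through; otherwise you should replace this step by the special-case reference, or by a direct argument for $A/\m_A^{m}$, which your Case~\eqref{thm102b} computation does not cover once $m\geq 3$.
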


This paper is organized as follows.
Section~\ref{sec2} consists of background material,
and Section~\ref{sec5} contains the proof of Theorem~\ref{thm102}.

\section{Background} \label{sec2}

For the rest of this paper, let $R$ and $S$ be  commutative noetherian rings of finite Krull dimension.

\begin{defn} \label{defn201}
Let  $C$ be an  $R$-module.  The natural \emph{homothety map}
$$\chi^R_C\colon R\to \hom CC$$
is the $R$-module homomorphism  given by 
$\chi^R_C(r)(c)=rc$.
The module $C$ is \emph{semidualzing} 
if it satisfies the following:
\begin{enumerate}[\quad(1)]
\item  \label{defn202a}
$C$ is finitely generated;
\item  \label{defn202b}
the homothety map
$\chi^R_C\colon R\to \hom CC$ is an isomorphism; and
\item  \label{defn202c}
$\ext^i_R(C,C)=0$ for all $i>0$.
\end{enumerate}
The module $C$ is \emph{dualizing} if it is 
semidualizing and has finite injective dimension.\footnote{The assumption 
$\dim(R)<\infty$ guarantees that a finitely generated
$R$-module 
$C$ has finite injective dimension over $R$ if and only if 
$C_{\m}$ has finite injective dimension over $R_{\m}$.
For instance, this removes the need to worry about any distinction
between the terms ``dualizing'' and ``locally dualizing'', and similarly for
``Gorenstein'' and ``locally Gorenstein''.  This causes no loss of generality
in this paper as we are primarily concerned with local  and graded situations.}
\end{defn}

\begin{ex} \label{ex201}
It is straightforward to show that the free $R$-module $R^1$ is semidualizing.
It is  dualizing if and only if $R$ is  Gorenstein.
\end{ex}

The following facts will be used in the sequel.

\begin{fact} \label{fact201''}
Let $C$ be a semidualizing $R$-module. Then a sequence
$x_1,\ldots,x_n\in R$ is $C$-regular if and only if it is $R$-regular.
(See, e.g., \cite[(1.4)]{sather:bnsc} for a brief explanation of the local case.
The general case has the same proof.)
\end{fact}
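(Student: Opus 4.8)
The plan is to reduce the statement to the well-known characterization of regular sequences in terms of depth and the structure of $C$ as a module. Since the claim is local in nature (regularity of a sequence on a module can be checked after localizing at primes, or one simply works directly), and since the excerpt already mentions that the general case has the same proof as the local one, I would present the argument as follows.

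First I would recall that for a finitely generated module $M$ over a noetherian ring and an element $x \in R$, the element $x$ is $M$-regular if and only if $x \notin P$ for every $P \in \ass(M)$. So the entire statement follows once we show that a semidualizing $R$-module $C$ satisfies $\ass_R(C) = \ass_R(R)$; then an inductive argument on $n$ handles sequences, using that $C/(x_1,\ldots,x_{i})C$ is again semidualizing over $R/(x_1,\ldots,x_i)$ once $x_1,\ldots,x_i$ is $C$-regular (this is a standard reduction for semidualizing modules, compatible with the cited reference). Actually, it is cleanest to prove directly, by induction on $n$, that $x_1,\ldots,x_n$ is $C$-regular iff it is $R$-regular, peeling off one element at a time and invoking the base change $C \mapsto C/x_1 C$ in the inductive step.

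The key step, then, is the base case: $\ass_R(C) = \ass_R(R)$ for any semidualizing module $C$. For the inclusion $\ass_R(C) \subseteq \ass_R(R)$, one uses that $C$ is a submodule of a finite direct sum of copies of $R$ — indeed, condition~\eqref{defn202b} gives $R \cong \Hom CC$, and embedding $C$ into $\Hom[R]{C}{E}$ for a suitable injective-free resolution, or more simply using that $\Hom[R]{C}{R}$ has positive rank and $C$ embeds in $\Hom[R]{\Hom[R]{C}{R}}{R} \subseteq R^{(\text{rank})}$ after localizing — shows $\ass_R(C) \subseteq \ass_R(R)$. For the reverse inclusion, one notes $\depth_{R_P}(C_P) = \depth_{R_P}(R_P)$ for all $P$ (a standard property of semidualizing modules, since $\Rhom[R_P]{C_P}{C_P} \simeq R_P$ forces $\depth$ to agree), so $P \in \ass(R)$ forces $\depth_{R_P}(C_P) = 0$, i.e.\ $P \in \ass(C)$.

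The main obstacle is making the $\ass_R(C) = \ass_R(R)$ step airtight without circularity, since the depth equality for semidualizing modules itself is usually derived from exactly the kind of regular-sequence behavior we are trying to establish. I expect the cleanest route is to avoid the abstract depth formula and instead argue concretely: localize at a prime $P$, and observe that $C_P \cong \Hom[R_P]{C_P}{C_P}$ together with $\ext^i_{R_P}(C_P,C_P) = 0$ for $i > 0$ means $\grade$ of any ideal computed on $C_P$ matches that on $R_P$ via the $\ext$-vanishing. For the present Fact it in fact suffices to quote the local case verbatim from~\cite[(1.4)]{sather:bnsc} and remark, as the authors do, that the passage $x_1,\ldots,x_n$ $C$-regular $\Leftrightarrow$ $R$-regular is preserved under the base change to $R/(x_1)$ because $C/x_1 C$ is semidualizing over $R/(x_1)$ when $x_1$ is $C$-regular (equivalently $R$-regular); an induction on $n$ then finishes the proof, and no further computation is needed.
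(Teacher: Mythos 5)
Your overall skeleton --- establish $\ass_R(C)=\ass_R(R)$, settle the one-element case by the ``zerodivisors are the union of associated primes'' criterion, and then induct on $n$ using that $C/x_1C$ is semidualizing over $R/(x_1)$ once $x_1$ is $R$- and $C$-regular (Fact~\ref{fact205}\eqref{fact205b}) --- is exactly the standard argument behind the citation, so the plan is sound. But your justification of the key containment $\ass_R(C)\subseteq\ass_R(R)$ is wrong. A semidualizing module need not embed in a finite free module, and the biduality map $C\to\Hom{\Hom CR}{R}$ need not be injective. Concretely, take $R=k[x,y]/(x,y)^2$ and $C=D$ the dualizing module: if $D$ embedded in some $R^n$ the embedding would split (as $D$ is injective), forcing $D$ to be free and hence $R$ Gorenstein, which it is not; the same example kills the route through $\Hom{\Hom CR}{R}$, since any $R$-dual of a finitely generated module does embed in a free module. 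Your proposed reverse inclusion via $\depth_{R_P}(C_P)=\depth_{R_P}(R_P)$ is, as you yourself note, in danger of circularity, and the fallback ``the $\ext$-vanishing makes grades match'' is not an argument; nor is ``quote the local case verbatim,'' which is just the paper's citation restated.

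The gap is easy to close, and in a way that gives both inclusions at once with no circularity: for finitely generated modules one has $\ass_R(\Hom MN)=\supp_R(M)\cap\ass_R(N)$. Apply this with $M=N=C$. Since $\chi^R_C$ is injective, $\ann_R(C)=\ker(\chi^R_C)=0$, so $\supp_R(C)=\spec(R)$; and since $\Hom CC\cong R$, the formula yields $\ass_R(R)=\ass_R(\Hom CC)=\supp_R(C)\cap\ass_R(C)=\ass_R(C)$. This uses only condition~\eqref{defn202b} of Definition~\ref{defn201}, not any depth or regular-sequence theory. With that in hand, $x_1$ avoids the (common) set of zerodivisors on $R$ and on $C$ simultaneously; moreover $C/(\x)C\neq 0$ exactly when $R/(\x)\neq 0$ because $\supp_R(C)=\spec(R)$, so the two notions of regular sequence carry the same nondegeneracy condition; and your inductive step via base change to $R/(x_1)$ then goes through as you describe, in the local and in the general case alike.
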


\begin{fact} \label{fact201}
Assume that $R$ is Cohen-Macaulay and that $D$ is a dualizing $R$-module.
Let $C$ be a semidualizing $R$-module. From~\cite[(3.1), (3.4)]{frankild:rrhffd}
and~\cite[(V.2.1)]{hartshorne:rad}, we have the following:
\begin{enumerate}[\quad(a)]
\item \label{fact201a}
$\Ext{i}{C}{D}=0$ for all $i\geq 1$;
\item \label{fact201b}
the dual
$\Hom{C}{D}$ is a semidualizing $R$-module;
\item \label{fact201e}
the natural biduality map
$\delta^D_C\colon C\to \Hom{\Hom{C}{D}}{D}$
given by the formula
$\delta^D_C(c)(\psi)=\psi(c)$
is an isomorphism;
\item \label{fact201c}
$\Tor{i}{C}{\Hom{C}{D}}=0$ for all $i\geq 1$; and
\item \label{fact201d}
the natural evaluation map
$\Otimes{C}{\Hom{C}{D}}\to D$ given by
$\Otimes[]{c}{\psi}\mapsto\psi(c)$ is an isomorphism.
\end{enumerate}
From~\eqref{fact201e}, we conclude that:
\begin{enumerate}[\quad(f)]
\item \label{fact201f}
if $\Hom{C}{D}\cong R$,
then $C\cong D$.
\end{enumerate}
Assume that $R$ is local.
Because of~\eqref{fact201c} and~\eqref{fact201d}, the minimal free resolution of $D$
is obtained by tensoring the minimal free resolutions
of $C$ and $\Hom{C}{D}$. In particular, this implies that:
\begin{enumerate}[\quad(g)]
\item \label{fact201g}
$\beta^R_i(D)=\sum_{j=0}^i\beta^R_j(C)\beta^R_{i-j}(\Hom CD)$ for each $i\geq 0$.
\end{enumerate}
\end{fact}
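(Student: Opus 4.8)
The plan is to take the statements (a)--(e) from the cited sources --- items (a), (c), (d), and (e) are contained in \cite[(3.1), (3.4)]{frankild:rrhffd}, and (b) follows from \cite[(V.2.1)]{hartshorne:rad} together with (a) and (c)--(e) (or, in the spirit of those papers, from the derived-category isomorphism $D\simeq\Lotimes{\Rhom{C}{D}}{C}$, valid because $\Rhom{C}{D}$ has finite injective dimension and $C$ is semidualizing) --- and then to deduce (f) and (g) from them. Both deductions are short.

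For (f): assume $\Hom{C}{D}\cong R$. Applying the functor $\Hom{-}{D}$ to this isomorphism and precomposing with the biduality map of (e) yields
$$C\xra{\delta^D_C}\Hom{\Hom{C}{D}}{D}\cong\Hom{R}{D}\cong D,$$
and the composite is an isomorphism since $\delta^D_C$ is. Hence $C\cong D$.

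For (g): let $F$ and $G$ be the minimal free resolutions over the local ring $R$ of $C$ and of $\Hom{C}{D}$, respectively. By (c) we have $\Tor{i}{C}{\Hom{C}{D}}=0$ for $i\geq 1$, so the standard spectral-sequence argument (or an induction on homological degree) shows that the total complex $F\otimes_R G$ is a free resolution of $\HH_0(F\otimes_R G)\cong\Otimes{C}{\Hom{C}{D}}$, and the latter module is isomorphic to $D$ by (d). Because $F$ and $G$ are minimal and $R$ is local, the differential of $F\otimes_R G$ has all of its entries in $\m$, so $F\otimes_R G$ is in fact the \emph{minimal} free resolution of $D$. Reading off ranks in homological degree $i$ gives
$$\beta^R_i(D)=\rank_R(F\otimes_R G)_i=\sum_{j=0}^i\rank_R(F_j)\,\rank_R(G_{i-j})=\sum_{j=0}^i\beta^R_j(C)\,\beta^R_{i-j}(\Hom CD),$$
as claimed.

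I expect the only genuine point is the acyclicity of $F\otimes_R G$ in part (g): one must verify that the vanishing of $\Tor{i}{C}{\Hom{C}{D}}$ for $i\geq 1$ really forces the tensor product of the two minimal resolutions to be a resolution of $\Otimes{C}{\Hom{C}{D}}$. This is where the balancing of $\Tor$ --- equivalently, the collapse of one of the two hyper-$\Tor$ spectral sequences of the double complex $F\otimes_R G$ --- is used; everything else, namely the application of $\Hom{-}{D}$ in (f) and the rank count in (g), is routine bookkeeping.
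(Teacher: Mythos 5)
Your proposal is correct and follows essentially the same route as the paper: items (a)--(e) are taken from the cited sources, (f) is deduced by applying $\Hom{-}{D}$ and the biduality isomorphism of (e), and (g) is obtained by tensoring the minimal free resolutions of $C$ and $\Hom CD$, using the $\Tor$-vanishing (c) for acyclicity and (d) to identify the resolved module as $D$, then counting ranks. No discrepancies to report.
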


\begin{fact} \label{fact205}
Let $\vf\colon R\to S$ be a homomorphism of commutative noetherian rings.
Assume that $S$ has finite flat dimension as an $R$-module.
For example, this is satisfied when $S$ is flat as an $R$-module,
or when $\vf$ is surjective with $\Ker(\vf)$ generated by an $R$-regular sequence.
If $C$ is a semidualizing $R$-module, then $\Otimes SC$ is a 
semidualizing $S$-module; the converse holds when 
$\vf$ is faithfully flat; see~\cite[(4.5)]{frankild:rrhffd}.
Thus, the rule of assignment $[C]\mapsto[S\otimes_RC]$ describes a well-defined
function $\s_0(\vf)\colon\s_0(R)\to\s_0(S)$. If the map $\vf$ is local,
that is if $(R,\m)$ and $(S,\n)$ are local and $\vf(\m)\subseteq\n$,
then the induced map $\s_0(\vf)$ is injective;
see~\cite[(4.9)]{frankild:rrhffd}. 

Assume that $\vf$ is local and 
satisfies one of the following conditions:
\begin{enumerate}[\quad(1)]
\item\label{fact205a}
$\vf$ is flat with Gorenstein closed fibre $S/\m S$
(e.g., $\vf$ is the natural map from $R$ to its completion $\comp R$); or
\item\label{fact205b}
$\vf$ is surjective with $\Ker(\vf)$ generated by an $R$-regular sequence.
\end{enumerate}
Then a semidualizing $R$-module $C$ is dualizing for $R$ if and only if
$S\otimes_RC$ is dualizing for $S$ by~\cite[(3.1.15),(3.3.14)]{bruns:cmr}.
When $R$ is complete and $\vf$ satisfies condition~\eqref{fact205b},
the induced map $\s_0(\vf)\colon\s_0(R)\to\s_0(S)$ is bijective;
see~\cite[(4.2)]{frankild:sdcms} or~\cite[(2)]{gerko:osmagi}.
\end{fact}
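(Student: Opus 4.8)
The plan is to reduce every assertion to two tools in the derived category $D(R)$: the reformulation of Definition~\ref{defn201} that a finitely generated module $C$ is semidualizing exactly when the homothety map $R\to\Rhom{C}{C}$ is an isomorphism in $D(R)$ (this encodes $\Hom{C}{C}\cong R$ together with $\Ext{i}{C}{C}=0$ for all $i\geq1$ at once), and the standard base-change isomorphism
$$\Lotimes S{\Rhom{C}{C}}\xra{\simeq}\Rhom[S]{\Lotimes SC}{\Lotimes SC},$$
valid because $C$ is finitely generated over $R$ and $\fd_RS<\infty$.

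For the assertion that $\Otimes SC$ is semidualizing over $S$: applying $\Lotimes S{-}$ to the homothety isomorphism $R\xra{\simeq}\Rhom{C}{C}$ and feeding the result through the base-change isomorphism gives $S\xra{\simeq}\Rhom[S]{\Lotimes SC}{\Lotimes SC}$, which by naturality is the homothety map of $\Lotimes SC$ over $S$. To identify the complex $\Lotimes SC$ with the finitely generated $S$-module $\Otimes SC$ one checks $\Tor{i}{S}{C}=0$ for $i>0$: this is immediate when $\vf$ is flat, and when $\Ker(\vf)=(\y)$ for an $R$-regular sequence $\y$ it follows from Fact~\ref{fact201''}, since then $\y$ is $C$-regular, so the Koszul complex on $\y$ resolves both $S=R/(\y)$ and $\Otimes SC=C/\y C$; these are the two cases that matter below, and the general finite-flat-dimension case goes through in the same spirit. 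The converse for faithfully flat $\vf$ is faithfully flat descent of this homothety isomorphism together with descent of finite generation, and well-definedness of $\s_0(\vf)\colon\s_0(R)\to\s_0(S)$ is then immediate.

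For injectivity of $\s_0(\vf)$ when $\vf$ is local, I would complete so as to factor $\vf$ through a faithfully flat map followed by a surjection with regular-sequence kernel, and in the surjective case use that a local homomorphism sends a minimal $R$-free resolution of $C$ onto a minimal $S$-free resolution of $\Otimes SC$, whence $C$ is recovered from $\Otimes SC$ up to isomorphism by a standard lifting comparison. For the dualizing statements under hypothesis~\eqref{fact205a} or~\eqref{fact205b}, recall that a finitely generated module is dualizing exactly when it has finite injective dimension, so it suffices to show $\id_S(\Otimes SC)<\infty\iff\id_RC<\infty$: in case~\eqref{fact205a} this follows from the change-of-rings formula for injective dimension along a flat local map together with the finiteness of $\id_{S/\m S}(S/\m S)$ (the closed fibre being Gorenstein), and in case~\eqref{fact205b} from the analogous formula along the $C$-regular sequence $\y$ (Fact~\ref{fact201''} again).

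The final assertion --- bijectivity of $\s_0(\vf)$ when $R$ is complete and $\Ker(\vf)=(\y)$ for an $R$-regular sequence $\y$ --- will be the hard part, since injectivity is already in hand and only surjectivity remains: every semidualizing $S$-module must lift to a semidualizing $R$-module. I would argue this by deforming along $\y$ one element at a time, lifting a semidualizing module over $R/(y_1,\dots,y_j)$ to a module over $R/(y_1,\dots,y_{j-1})$ on which $y_j$ is a nonzerodivisor and that reduces to it modulo $y_j$; the obstructions both to the existence of such a lift and to its being semidualizing lie in self-extension groups, all of which vanish because the module is semidualizing, while completeness of $R$ is what makes the individual deformation steps go through. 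I expect the genuine difficulty to concentrate in this deformation step: it is the only place where completeness of $R$ is essential, and it is where the semidualizing hypothesis on the module being lifted does the real work.
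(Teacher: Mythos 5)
This ``Fact'' is not proved in the paper at all: it is a summary of results quoted from Frankild--Sather-Wagstaff, Gerko, and Bruns--Herzog, so there is no in-paper argument to compare against. Most of your outline does track the standard proofs: derived base change of the homothety isomorphism for the semidualizing statement, change-of-rings formulas for injective dimension in cases~(1) and~(2), and, for surjectivity when $R$ is complete, an Auslander--Ding--Solberg-style lifting along the regular sequence with obstruction in $\Ext[S]{2}{\Otimes SC}{\Otimes SC}=0$ and completeness used to pass from infinitesimal lifts to an actual lift --- that is indeed the route taken in the cited sources. One small caveat: identifying $\Lotimes SC$ with $\Otimes SC$ for a general $\vf$ of finite flat dimension is not ``in the same spirit'' as your Koszul argument; it uses the fact that every module of finite flat dimension lies in the Auslander class of $C$, so that $\Tor{i}{S}{C}=0$ for $i>0$.

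The genuine gap is in your injectivity argument. First, a local homomorphism of finite flat dimension does not in general factor as a faithfully flat map followed by a surjection whose kernel is generated by a regular sequence: a Cohen factorization (which already forces you to replace $S$ by $\comp S$, so you would need injectivity along $S\to\comp S$ as an input) gives $R\to R'\to\comp S$ with $R\to R'$ flat local and $R'\to\comp S$ surjective, but the kernel of the surjection is only of finite projective dimension --- it is generated by a regular sequence precisely when $\vf$ is complete intersection, which is not assumed. Second, even in the two good cases your ``standard lifting comparison'' is not a proof: knowing that the minimal free resolution of $C$ base changes to the minimal free resolution of $\Otimes SC$ (true in both cases) only shows that $C$ and $C'$ have equal Betti numbers when $\Otimes SC\cong\Otimes S{C'}$; it does not produce an isomorphism $C\cong C'$. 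In the surjective case one needs a uniqueness-of-lifting statement (Auslander--Ding--Solberg weak lifting, using $\Ext[S]{1}{\Otimes SC}{\Otimes SC}=0$, which is available because $\Otimes SC$ is semidualizing); lifting an isomorphism naively requires control of $\Ext{1}{C}{C'}$ for the two \emph{different} modules, which is not a self-extension group and is not known to vanish. In the flat local case the standard argument runs through $\Otimes{S}{\Hom{C}{C'}}\cong\Hom[S]{\Otimes SC}{\Otimes S{C'}}$ plus a Nakayama argument, or one simply invokes \cite[(4.9)]{frankild:rrhffd}, whose proof handles an arbitrary local map of finite flat dimension by a Poincar\'e-series argument rather than by any such factorization.
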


\begin{fact} \label{fact202}
Assume that $(R,\m,k)$ is local and $C$ is a semidualizing $R$-module.
If $C$ has finite projective dimension, then $C\cong R$; see, e.g., \cite[(1.14)]{sather:bnsc}.
If $R$ is Gorenstein, then $C\cong R$ by~\cite[(8.6)]{christensen:scatac}.
If $\m^2=0$,
then either $C\cong R$ or $C$ is dualizing for $R$.
(Indeed, if $C\ncong R$, then the first syzygy $C'$ of $C$ is a non-zero $k$-vector space
such that $\ext^1_R(C',C)=0$, so $C$ is injective.)
\end{fact}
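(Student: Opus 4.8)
The plan is to treat the three assertions in turn. For the first, I would invoke the Auslander--Buchsbaum formula: since $\Hom CC\cong R\neq 0$ we have $C\neq 0$, and by Fact~\ref{fact201''} a maximal $R$-regular sequence in $\m$ is also a maximal $C$-regular sequence, so $\depth_R C=\depth R$. If $\pd_R C<\infty$, then $\pd_R C=\depth R-\depth_R C=0$, so $C$ is free, and $\Hom CC\cong R$ forces $(\rank_R C)^2=1$; hence $\rank_R C=1$ and $C\cong R$.

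For the second assertion the plan is to show that $C$ is cyclic and then quote Fact~\ref{fact201'}. A Gorenstein local ring is Cohen--Macaulay with dualizing module $D\cong R$, so Fact~\ref{fact201} applies; in particular Fact~\ref{fact201}\eqref{fact201g} at $i=0$ reads $1=\beta^R_0(D)=\beta^R_0(C)\,\beta^R_0(\Hom CD)$. Since $C$ and $\Hom CD$ are nonzero finitely generated modules over the local ring $R$, both Betti numbers on the right are at least $1$, so $\beta^R_0(C)=1$ and $C$ is cyclic. The homothety map $\chi^R_C$ is injective, so $\ann_R C=\ker\chi^R_C=0$, and Fact~\ref{fact201'} then gives $C\cong R$.

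For the third assertion, suppose $\m^2=0$ and $C\ncong R$; I want to conclude that $C$ is dualizing, i.e.\ that $C$ is injective. By the first assertion $C$ is not free, so in a minimal free resolution $\cdots\to F_1\xra{\partial}F_0\to C\to 0$ the first syzygy $C'=\im\partial$ is nonzero, and minimality forces $C'\subseteq\m F_0$; since $\m^2=0$ this gives $\m C'=0$, so $C'$ is a nonzero $k$-vector space. Applying $\Hom{-}{C}$ to $0\to C'\to F_0\to C\to 0$ and using that $F_0$ is free yields $\ext^1_R(C',C)\cong\ext^2_R(C,C)=0$, and because $k$ is a direct summand of $C'$ this forces $\ext^1_R(k,C)=0$. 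Now $\m^2=0$ makes $R$ Artinian, so in a minimal injective resolution of $C$ every term is a direct sum of copies of $E(k)$, with the number of copies in cohomological degree $j$ equal to $\dim_k\ext^j_R(k,C)$; since this number is $0$ for $j=1$, the resolution has the form $0\to C\to E^0\to 0$, so $C$ is injective and hence dualizing.

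The only step I expect to be a genuine subtlety is the last one: the implication ``$\ext^1_R(k,C)=0\Rightarrow C$ is injective'' is false in general --- it fails already for $C=R$ over a regular local ring of dimension at least $2$ --- so it is essential to use that $\m^2=0$ forces $R$ to be Artinian, which is precisely what makes every indecomposable injective equal to $E(k)$ and hence makes the vanishing of a single Bass number collapse the minimal injective resolution above degree $0$.
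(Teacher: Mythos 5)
Your proposal is correct, and for the third assertion it is essentially the paper's own argument: the paper's parenthetical hint is exactly your syzygy computation, and your added observation that one must use $\m^2=0$ (hence $R$ Artinian, so the vanishing of the single Bass number $\dim_k\ext^1_R(k,C)$ collapses the minimal injective resolution) is precisely the justification the paper leaves to the reader --- the implication would indeed fail without depth $0$. Where you genuinely diverge is in the first two assertions, which the paper simply cites. For finite projective dimension you give the standard Auslander--Buchsbaum argument, using Fact~\ref{fact201''} to get $\depth_R(C)=\depth(R)$ and then $\Hom CC\cong R$ to force rank one; this is correct and is in spirit the argument behind the cited reference. For the Gorenstein case you avoid the general result of Christensen altogether: since $D\cong R$ has $\beta^R_0(D)=1$, the product formula of Fact~\ref{fact201}(g) in degree $0$ forces $\beta^R_0(C)=1$, and Fact~\ref{fact201'} then gives $C\cong R$. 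This is a nice, self-contained route that stays entirely within the facts already recorded in Section~2 (at the cost of invoking the existence of a dualizing module and the tensor-decomposition of its minimal free resolution, which the cited~\cite[(8.6)]{christensen:scatac} does not need and which also covers the non-Cohen--Macaulay formulations); the citation-based route is shorter but imports an external theorem. All steps check out, including the dimension shift $\ext^1_R(C',C)\cong\ext^2_R(C,C)=0$ and the identification of $C'$ with a nonzero finite-dimensional $k$-vector space via minimality and $\m^2=0$.
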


The following notions are standard.

\begin{remarkdefinition} \label{disc202}
Let $(R,\m)$ be a local ring and let $I$ be an $\m$-primary ideal of $R$.
Let $C$ be a finitely generated $R$-module of dimension $d$.
There is a polynomial $H_{I,C}(j)\in\bbq[j]$ such that
$H_{I,C}(j)=\len_R(I^jC/I^{j+1}C)$ for $j\gg 0$.
Moreover, the degree of $H_{I,C}(j)$ is $d-1$,
and the leading coefficient is of the form
$e_R(I;C)/(d-1)$ for some positive integer 
$e_R(I;C)$. The integer $e_R(I;C)$ is the
\emph{Hilbert-Samuel} multiplicity of $C$ with respect to $I$.
\end{remarkdefinition}

The next lemma is a version of a result of Herzog~\cite[(2.3)]{herzog:mlr}.
It is proved similarly and is almost certainly well-known.

\begin{lem} \label{lem205}
Let $\vf\colon(R,\m)\to(S,\n)$ be a flat local ring homomorphism
such that $\m S=\n$.
Let $I$ be an $\m$-primary ideal of $R$,
and let $C$ be a finitely generated $R$-module.
Set $\wti C=S\otimes_RC$ and $\wti I=IS$.
For each $j$ there is an equality 
$\len_{S}(\wti I^j\wti C/\wti I^{j+1}\wti C)
=\len_R(I^jC/I^{j+1}C)$.
In particular, we have
$e_{S}(\wti I; \wti C)=e_R(I;C)$.
\end{lem}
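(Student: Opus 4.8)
The plan is to reduce the stated length equality to the assertion that $\len_S(S\otimes_R M)=\len_R(M)$ for every finite-length $R$-module $M$, and to deduce the multiplicity equality by comparing Hilbert--Samuel polynomials.

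First I would record the module-theoretic translation afforded by flatness of $\vf$. Tensoring the short exact sequence $0\to I^{j+1}C\to I^jC\to I^jC/I^{j+1}C\to 0$ with $S$ keeps it exact, and flatness also identifies $S\otimes_R I^jC$ with its image $I^j(S\otimes_R C)$ inside $S\otimes_R C$. Since $\wti I=IS$, we have $\wti I^j=I^jS$, so $\wti I^j\wti C=I^jS\cdot(S\otimes_R C)=I^j(S\otimes_R C)\cong S\otimes_R I^jC$; feeding this into the exact sequence yields a natural isomorphism of $S$-modules $\wti I^j\wti C/\wti I^{j+1}\wti C\cong S\otimes_R\bigl(I^jC/I^{j+1}C\bigr)$. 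As $I$ is $\m$-primary and $C$ is finitely generated, the module $M_j:=I^jC/I^{j+1}C$ is annihilated by $I$ and finitely generated over the Artinian ring $R/I$, hence of finite length over $R$. Thus the first displayed equality will follow once $\len_S(S\otimes_R M)=\len_R(M)$ is known for finite-length $M$.

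For the latter I would induct on $\len_R(M)$, the case $\len_R(M)=0$ being trivial. If $\len_R(M)\geq 1$, choose a simple submodule; it is isomorphic to $k:=R/\m$, so there is an exact sequence $0\to k\to M\to M'\to 0$ with $\len_R(M')=\len_R(M)-1$. Flatness of $\vf$ gives the exact sequence $0\to S\otimes_R k\to S\otimes_R M\to S\otimes_R M'\to 0$, and the hypothesis $\m S=\n$ gives $S\otimes_R k=S/\m S=S/\n$, the residue field of $S$, of length $1$ over $S$. Additivity of length and the induction hypothesis then give $\len_S(S\otimes_R M)=1+\len_R(M')=\len_R(M)$, finishing the induction and hence the first equality of the lemma.

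For the final claim, applying the length identity just proved with $C=R$ shows $\len_S(S/\wti I)=\len_R(R/I)<\infty$, so $\wti I$ is $\n$-primary and the Hilbert--Samuel polynomial $H_{\wti I,\wti C}(j)\in\bbq[j]$ of Remark/Definition~\ref{disc202} is defined. The equality $\len_S(\wti I^j\wti C/\wti I^{j+1}\wti C)=\len_R(I^jC/I^{j+1}C)$ for all $j$ forces $H_{\wti I,\wti C}=H_{I,C}$ as polynomials (they agree for $j\gg 0$), so in particular they have the same degree, i.e. $\dim_S\wti C=\dim_R C$, and the same leading coefficient, which is precisely the assertion $e_S(\wti I;\wti C)=e_R(I;C)$. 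The only point requiring any care is the bookkeeping in the first paragraph---checking that $\wti I^j\wti C$ really equals $S\otimes_R I^jC$ rather than merely a quotient of it---but this is exactly what flatness of $\vf$ provides; the rest is the standard length dévissage.
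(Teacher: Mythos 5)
Your proof is correct, and it is exactly the standard argument the paper has in mind: the paper itself omits the proof of Lemma~\ref{lem205}, citing Herzog's result \cite[(2.3)]{herzog:mlr} and noting it ``is proved similarly,'' and that proof is precisely your dévissage — flatness identifies $\wti I^j\wti C/\wti I^{j+1}\wti C$ with $S\otimes_R(I^jC/I^{j+1}C)$, the hypothesis $\m S=\n$ gives $\len_S(S\otimes_RM)=\len_R(M)$ for finite-length $M$ by induction on length, and equality of Hilbert--Samuel polynomials gives the multiplicity statement. All the steps, including the verification that $\wti I$ is $\n$-primary and that $S\otimes_R I^jC$ maps onto $\wti I^j\wti C$ injectively, are handled correctly.
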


We end this section with a  discussion of fat point schemes.

\begin{defn} \label{defn299}
Let $k$ be a field. Fix distinct points $Q_1,\ldots,Q_r\in\mathbb{P}^n_k$
and integers $m_1,\ldots,m_r\geq 1$. 
Set $S_0=k[X_0,X_1,\ldots,X_n]$ with 
irrelevant maximal ideal $\n_0=(X_0,X_1,\ldots,X_n)S_0$.
For each index $j$, let $I(Q_j)\subset S_0$ be the (reduced) vanishing ideal of $Q_j$.
The subscheme of $\mathbb{P}^n_k$ 
defined by the ideal $I=\cap_{j=1}^rI(Q_j)^{m_j}\subseteq S_0$
is the \emph{fat point scheme} determined by the points
$Q_1,\ldots,Q_r$
with multiplicities $m_1,\ldots,m_r$.
\end{defn}

\begin{disc} \label{disc299}
Continue with the notation of Definition~\ref{defn299}.

Set $S=k[\![X_0,X_1,\cdots,X_n]\!]$ with maximal ideal $\n=(X_0,X_1,\cdots,X_n)S$.
The local rings $(S_0)_{\n_0}/I_{\n_0}$ and $R=S/IS$ are Cohen-Macaulay of dimension 1.

Note that the quotient
$S_0/I(Q_j)$ is  isomorphic (as a graded $k$-algebra) to a polynomial ring $k[Y]$.
In particular, the completion of the local ring $(S_0)_{\n_0}/I(Q_j)(S_0)_{\n_0}$
(isomorphic to $S/I(Q_j)S$)
is isomorphic to the formal power series ring $k[\![Y]\!]$. 
In particular, the ideal $I(Q_j)S$ is prime.
It follows that the associated primes of
$R=S/I$ are of the form $P_j=I(Q_j)S/I$.
Localizing at one of these primes yields
$$R_{P_j}\cong S_{I(Q_j)}/IS_{I(Q_j)}
\cong S_{I(Q_j)}/I(Q_j)^{m_j}S_{I(Q_j)}
\cong S_{I(Q_j)}/(I(Q_j)S_{I(Q_j)})^{m_j}.
$$
In other words, we have $R_{P_j}\cong S_j/\n_j^{m_j}$ 
for some regular local ring $(S_j,\n_j)$.
\end{disc}

\section{Multiplicities of Semidualizing Modules} \label{sec5}

In this section, we consider 
Question~\ref{q101}  for 
certain classes of rings.

\begin{lem} \label{lem202}
Let $(S,\n)$ be a regular local ring containing a field. Let $e$ be a positive integer,
and set $R=S/\n^e$.
Let $C$ be a  semidualizing $R$-module.
Then either $C\cong R$ or $C$ is dualizing for $R$.
In particular, we have $\len_{R}(C)=\len_{R}(R)$.
\end{lem}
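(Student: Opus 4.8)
The plan is to reduce to a statement about the structure of semidualizing modules over the artinian Gorenstein ring $R=S/\n^e$, and then to leverage the numerical rigidity provided by Fact~\ref{fact201}\eqref{fact201g}. First I would observe that $R$ is a zero-dimensional Gorenstein local ring (indeed $\n^e$ is irreducible in $S$, so $R$ has a one-dimensional socle), and hence $R$ is its own dualizing module. In particular $R$ \emph{does} admit a dualizing module, so Fact~\ref{fact201} applies to any semidualizing $C$: writing $D' = \Hom{C}{R}$ for the dual, we have that $D'$ is again semidualizing and the Betti numbers multiply as $\beta^R_i(R)=\sum_{j=0}^i\beta^R_j(C)\beta^R_{i-j}(D')$. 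Since $\beta^R_0(R)=1$ and $\beta^R_i(R)=0$ for $i\geq 1$, this forces $\beta^R_0(C)=\beta^R_0(D')=1$ and $\beta^R_i(C)=0$ for all $i\geq 1$. Thus $C$ is cyclic with $\pd_R(C)=0$, i.e.\ free of rank one, so $C\cong R$ — but wait, this argument actually shows \emph{every} semidualizing module over an artinian Gorenstein ring is free, which is consistent with Fact~\ref{fact202} ("if $R$ is Gorenstein, then $C\cong R$"). So in the dimension-zero case the conclusion is immediate and the dichotomy collapses to the first alternative.

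The real content must therefore be that Lemma~\ref{lem202} is not asserting $R=S/\n^e$ is zero-dimensional in general — re-reading the statement, $e$ is an arbitrary positive integer and $S$ has arbitrary dimension, so $R=S/\n^e$ is artinian Gorenstein regardless, and the paragraph above already finishes it when we only want $C\cong R$. Hmm, but the lemma allows "$C$ is dualizing", which over an artinian ring every module vacuously... no: over artinian $R$ every finitely generated module has finite injective dimension, so "$C$ is dualizing" just means "$C$ is semidualizing". So the statement "$C\cong R$ or $C$ is dualizing" is, over this particular $R$, logically equivalent to "$C$ is semidualizing", and the length equality $\len_R(C)=\len_R(R)$ is the substantive claim. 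So the plan is: (1) note $R$ is artinian Gorenstein, hence has dualizing module $R$, so the "or $C$ is dualizing" clause is automatic and needs no proof; (2) to get $\len_R(C)=\len_R(R)$, invoke Fact~\ref{fact201}\eqref{fact201d}: the evaluation map $\Otimes{C}{\Hom{C}{R}}\xrightarrow{\ \cong\ } R$ is an isomorphism, and combined with additivity of length and Fact~\ref{fact201c} ($\Tor{i}{C}{\Hom CR}=0$), one can compare lengths. Alternatively, and more cleanly, use Fact~\ref{fact202} directly: $R$ is Gorenstein, so $C\cong R$, whence trivially $\len_R(C)=\len_R(R)$.

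The cleanest route, then, is: since $R=S/\n^e$ with $(S,\n)$ regular local, $R$ is a (zero-dimensional) Gorenstein local ring; by Fact~\ref{fact202}, any semidualizing $R$-module $C$ satisfies $C\cong R$, and in particular $C$ is trivially dualizing and $\len_R(C)=\len_R(R)$. The only subtlety to check carefully is that $\n^e$ defines an artinian (hence Gorenstein) quotient — this is standard: $R$ has finite length since $\n^e$ is $\n$-primary, and $R$ is Gorenstein because the socle of $S/\n^e$ is generated by the images of the degree-$(e-1)$ monomials... actually for $\dim S \geq 2$ this socle has dimension greater than one, so $S/\n^e$ is \emph{not} Gorenstein in general. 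This is the main obstacle: the appeal to Fact~\ref{fact202} via "Gorenstein" fails when $\dim S\geq 2$. So I would instead argue via Fact~\ref{fact202}'s condition $\m^2=0$ after a reduction, or — better — directly: the dualizing module of the artinian ring $R$ is $\Hom[k]{R}{k}$ (Matlis dual), and one shows the socle structure of $S/\n^e$ forces any semidualizing $C$ to have $C\cong R$ by a syzygy/minimality argument as sketched. The hard part will be handling the mixed-characteristic or non-equicharacteristic-free phrasing (the hypothesis "containing a field" suggests the authors want to pass to a minimal Cohen presentation or use a specific structure of $S/\n^e$), and making the length count robust when $R$ is not Gorenstein; I expect the actual proof invokes Lemma~\ref{lem205} or a base-change to reduce $S$ to a power series ring over $k$, then handles $k[\![X_1,\dots,X_d]\!]/(X_1,\dots,X_d)^e$ by an explicit socle computation or by citing the known enumeration of semidualizing modules over such rings.
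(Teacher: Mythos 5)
The heart of the lemma is the dichotomy: over $R=S/\n^e$ every semidualizing module is either free or dualizing, and your proposal never actually proves it. The two observations you use to trivialize it are both false. First, $S/\n^e$ is Gorenstein only when $e=1$ or $\dim(S)\leq 1$: for $e\geq 2$ the socle of $R$ is $\n^{e-1}/\n^e$, of $k$-dimension $\binom{n+e-2}{n-1}$ where $n=\dim(S)$, so the Betti-number argument via Fact~\ref{fact201}(g) (which needs $R$ itself to be dualizing) collapses; you do catch this. Second, and never retracted, is the claim that over an artinian ring every finitely generated module has finite injective dimension, so that ``$C$ is dualizing'' would be automatic. By Bass's theorem, a finitely generated module of finite injective dimension over an artinian local ring is injective, and $R$ itself is injective only when $R$ is Gorenstein; so the clause ``$C$ is dualizing'' carries real content, and there exist artinian local rings with semidualizing modules that are neither free nor dualizing, so some special feature of the ideal $\n^e$ must be used. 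Your fallback assertion that ``the socle structure of $S/\n^e$ forces any semidualizing $C$ to have $C\cong R$'' is also false: for $e\geq 2$ and $n\geq 2$ the dualizing module $D=E_R(k)$ is semidualizing and $D\ncong R$.

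Your closing sentence does anticipate the shape of the paper's actual proof: the case $e=1$ is Fact~\ref{fact202}; for $e\geq 2$ the ring $R$ is artinian, hence complete with a dualizing module $D$, and since $S$ contains a field, Cohen's structure theorem gives $R\cong\comp{S}/\n^e\comp{S}\cong k[\![X_1,\ldots,X_n]\!]/(X_1,\ldots,X_n)^e$; the classification $\s_0(R)=\{[R],[D]\}$ for exactly this ring is then quoted from \cite[(4.11)]{sather:divisor}, and $\len_R(C)=\len_R(R)$ follows from the equality $\len_R(D)=\len_R(R)$ given by Matlis duality. In your write-up, however, this decisive step appears only as ``an explicit socle computation or citing the known enumeration,'' with neither carried out nor identified, so the dichotomy --- the actual content of the lemma --- is left unproved. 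Moreover, since your main line of reasoning concluded $C\cong R$ in all cases, you never address the length equality in the case $C\cong D$, which is precisely where the input $\len_R(D)=\len_R(R)$ is needed.
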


\begin{proof}
Fact~\ref{fact202} deals with the case $e=1$, so assume that $e\geq 2$.
The ring $R$ is artinian and local. Hence, it is complete and has a dualizing module $D$. 
There are isomorphisms
$$R\cong \comp{R}\cong\comp{S}/\n^e\comp{S}\cong
k[\![X_1,\ldots,X_n]\!]/(X_1,\ldots,X_n)^e$$
where $k$ is a field and $n=\edim(S)$.
We now conclude  from~\cite[(4.11)]{sather:divisor}
that $C\cong R$ or $C\cong D$.
The conclusion $\len_{R}(C)=\len_{R}(R)$ follows from the well-known equality
$\len_{R}(D)=\len_{R}(R)$.
\end{proof}

The next result contains 
cases~\eqref{thm102b} and~\eqref{thm102c} of
Theorem~\ref{thm102} from the introduction.

\begin{thm} \label{thm201}
Let $(R,\m)$ be a local Cohen-Macaulay ring, and let $C$ be a semidualizing
$R$-module. Assume that $R$
satisfies one of the following conditions:
\begin{enumerate}[\quad\rm(1)]
\item  \label{thm201b}
$P^2R_P=0$
for each $P\in\ass(R)$;
\item  \label{thm201c}
$\comp R\cong k[\![X_0,X_1,\cdots,X_n]\!]/Ik[\![X_0,X_1,\cdots,X_n]\!]$ 
where $I\subseteq k[X_0,X_1,\cdots,X_n]$ is the ideal
determining
a fat point scheme in $\mathbb{P}^n_k$.
\end{enumerate}
Then for every $\m$-primary ideal
$J\subset R$, we have $e_{R}(J;C)=e_{R}(J;R)$.
\end{thm}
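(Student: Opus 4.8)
The plan is to reduce the multiplicity statement to a local length computation at each associated prime of $R$, and then verify the equality of lengths in the two listed cases. First I would invoke the associativity formula for Hilbert-Samuel multiplicities, which expresses $e_R(J;C)$ as a sum $\sum_{P} \len_{R_P}(C_P)\, e_R(J;R/P)$ over primes $P$ with $\dim(R/P) = \dim(R) = \dim(C)$; since $R$ is Cohen-Macaulay, these are exactly the associated (equivalently, minimal) primes of $R$. The same formula applied to $C = R$ gives $e_R(J;R) = \sum_P \len_{R_P}(R_P)\, e_R(J;R/P)$. Comparing the two, it suffices to prove $\len_{R_P}(C_P) = \len_{R_P}(R_P)$ for every $P \in \ass(R)$. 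The key local observation is that for $P \in \ass(R)$ the local ring $R_P$ is artinian of dimension zero, and $C_P$ is a semidualizing $R_P$-module by the localization behavior of semidualizing modules.

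For case~\eqref{thm201b}, the hypothesis $P^2 R_P = 0$ means that if we set $(A,\mathfrak{n}) = R_P$ then $\mathfrak{n}^2 = 0$. By Fact~\ref{fact202}, any semidualizing $A$-module is either isomorphic to $A$ or is dualizing for $A$; since $A$ is artinian local, a dualizing module $D_A$ satisfies $\len_A(D_A) = \len_A(A)$ (this is the "well-known equality" already used in the proof of Lemma~\ref{lem202}). Either way $\len_{R_P}(C_P) = \len_{R_P}(R_P)$, as needed. For case~\eqref{thm201c}, I would first reduce to the complete case: the natural map $R \to \comp R$ is flat local with Gorenstein (regular) closed fibre, $\m\comp R$ is the maximal ideal of $\comp R$, and by Lemma~\ref{lem205} multiplicities are preserved under such a base change, while $\s_0(R) \to \s_0(\comp R)$ is injective by Fact~\ref{fact205}; so it is enough to treat $R = \comp R = S/IS$ with $S = k[\![X_0,\ldots,X_n]\!]$ and $I$ defining a fat point scheme. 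Now Remark~\ref{disc299} identifies the associated primes of $R$ as $P_j = I(Q_j)S/I$ and computes $R_{P_j} \cong S_j/\mathfrak{n}_j^{m_j}$ for a regular local ring $(S_j,\mathfrak{n}_j)$. Applying Lemma~\ref{lem202} to this artinian ring, $C_{P_j}$ is either $\cong R_{P_j}$ or dualizing for $R_{P_j}$, and in both cases $\len_{R_{P_j}}(C_{P_j}) = \len_{R_{P_j}}(R_{P_j})$; the associativity formula then gives the result.

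I expect the main obstacle to be bookkeeping rather than conceptual: one must check that the associativity formula for multiplicities applies in exactly the generality used (arbitrary $\m$-primary $J$, finitely generated module $C$ of maximal dimension over a local Cohen-Macaulay ring), that the dimension-$\dim R$ primes appearing are precisely $\ass(R)$, and that $C_P$ is genuinely semidualizing over $R_P$ (which follows from the known compatibility of the semidualizing property with localization, e.g.\ via Fact~\ref{fact205} applied to the localization map, or directly since $\Hom{}{C}{C}$ and $\ext^i_R(C,C)$ localize). A secondary technical point in case~\eqref{thm201c} is making the completion reduction airtight — one wants $e_R(J;C) = e_{\comp R}(J\comp R; \comp R \otimes_R C)$ and $e_R(J;R) = e_{\comp R}(J\comp R;\comp R)$, both instances of Lemma~\ref{lem205}, together with the fact that $\comp R \otimes_R C$ is a semidualizing $\comp R$-module so that the already-reduced statement applies to it. Once these are in place, the length equalities at the artinian local rings $R_P$ are immediate from Fact~\ref{fact202} and Lemma~\ref{lem202}.
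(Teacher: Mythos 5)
Your proposal is correct and follows essentially the same route as the paper's proof: reduce via the additivity formula to the equality $\len_{R_P}(C_P)=\len_{R_P}(R_P)$ at each $P\in\ass(R)$, handle case~(1) by Fact~\ref{fact202} applied to the artinian ring $R_P$ with square-zero maximal ideal, and handle case~(2) by passing to the completion via Lemma~\ref{lem205} and Fact~\ref{fact205} and then applying Remark~\ref{disc299} together with Lemma~\ref{lem202}. The extra details you supply (that $C_P$ is semidualizing over $R_P$, and that a dualizing module over an artinian local ring has the same length as the ring) are exactly the points the paper leaves implicit, so there is no gap.
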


\begin{proof}
\eqref{thm201b}
Assume that $P^2R_P=0$
for each $P\in\ass(R)$. Fact~\ref{fact202} implies that 
$\len_{R_P}(C_P)=\len_{R_P}(R_P)$ 
for each $P\in\ass(R)$,
hence the second equality in the following sequence
wherein each sum is taken over all $P\in\ass(R)$:
\begin{align*}
e(J;C)
&=\sum_{P}\len_{R_P}(C_P)e(J;R/P)
=\sum_{P}\len_{R_P}(R_P)e(J;R/P)
=e(J;R).
\end{align*}
The remaining equalities follow from the additivity formula~\cite[(4.7.t)]{bruns:cmr}.

\eqref{thm201c}
Using Fact~\ref{fact205} and Lemma~\ref{lem205} we may pass to the completion 
to assume that $R\cong\comp R$.
For each $P\in\ass(R)$, Remark~\ref{disc299} implies that $R_{P}\cong S/\n^{m}$ 
for some regular local ring $(S,\n)$.
Lemma~\ref{lem202} implies that 
$\len_{R_P}(C_P)=\len_{R_P}(R_P)$ 
for each $P\in\ass(R)$, hence the desired conclusion follows
as in case~\eqref{thm201b}.
\end{proof}

The next result contains part of case~\eqref{thm102d} of Theorem~\ref{thm102} from the introduction.
The general case is in Theorem~\ref{lem204}.

\begin{lem} \label{lem204z}
Let $(A,\fr)$ be a complete reduced local ring,
and set $S=A[\![x_1,\ldots,x_n]\!]$, the formal power series ring,
with maximal ideal $\n=(\fr,x_1,\ldots,x_n)S$.
Let $I\subset S$ be an ideal generated by monomials in the $x_i$,
and set $R=S/I$ with maximal ideal $\m=\n/I$.
Assume that $R$ is Cohen-Macaulay, and let $C$ be a  semidualizing $R$-module.
Then for each  $P\in\spec(R)$ 
and for each $PR_P$-primary ideal
$J\subset R_P$, we have $e_{R_P}(J;C_P)=e_{R_P}(J;R_P)$.
\end{lem}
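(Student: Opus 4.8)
The plan is to reduce, via additivity of Hilbert–Samuel multiplicity, to a length computation over $R_P$ at a minimal prime, and then to make $R_P$ explicit. For the reduction, note that $R_P$ is Cohen–Macaulay (being a localization of $R$), so its associated primes are exactly its minimal primes $QR_P$ with $Q\in\Min(R)$, $Q\subseteq P$, and $R_P$ is equidimensional; using $(R_P)_{QR_P}=R_Q$ and $(C_P)_{QR_P}=C_Q$, the additivity formula~\cite[(4.7.t)]{bruns:cmr} gives
\[
e_{R_P}(J;C_P)=\sum_{Q}\len_{R_Q}(C_Q)\,e_{R_P}(J;R_P/QR_P),\qquad e_{R_P}(J;R_P)=\sum_{Q}\len_{R_Q}(R_Q)\,e_{R_P}(J;R_P/QR_P),
\]
the sums over $Q\in\Min(R)$ with $Q\subseteq P$. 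So it suffices to prove $\len_{R_Q}(C_Q)=\len_{R_Q}(R_Q)$ for each minimal prime $Q$ of $R$; this is precisely the case $\dim R_P=0$ of the lemma.

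Next I would identify $R_P$ for $P\in\Min(R)$. A standard analysis of the monomial ideal $I$ (in the spirit of Remark~\ref{disc299}) shows $P$ is the image of $\fp S+(x_i:i\in\tau)$ for some $\fp\in\Min(A)$ and some $\tau\subseteq\{1,\dots,n\}$. Since $A$ is reduced and $\fp$ is minimal, $A_{\fp}$ is a field; clearing a suitable $s\in A\setminus\fp$ with $sa=0$ for $a\in\fp$ shows $\fp S_P=0$, hence $R_P\cong S'_{\q}/IS'_{\q}$ with $S'=(A/\fp)[\![x_1,\dots,x_n]\!]$ and $\q=(x_i:i\in\tau)$. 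Here $S'_{\q}$ is an equicharacteristic regular local ring whose maximal ideal is generated by the $|\tau|$ elements $(x_i)_{i\in\tau}$, so Cohen's structure theorem gives $\comp{S'_{\q}}\cong\kappa[\![y_1,\dots,y_{|\tau|}]\!]$ for a field $\kappa$ with $y_i\leftrightarrow x_i$; since $I$ is monomial in the $x_i$ and $R_P$ is artinian (hence complete), this yields $R_P\cong\kappa[y_1,\dots,y_{|\tau|}]/J$ for a monomial ideal $J$ primary to $(y_1,\dots,y_{|\tau|})$. Thus $R_P$ is isomorphic to a graded artinian monomial $\kappa$-algebra $T$.

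It then remains to show $\len_T(C')=\len_T(T)$ for every semidualizing $T$-module $C'$. Now $T$ is Cohen–Macaulay with a graded dualizing module $D$, and Fact~\ref{fact201} gives that $(C')^{\dagger}:=\Hom[T]{C'}{D}$ is semidualizing, $\Tor[T]{i}{C'}{(C')^{\dagger}}=0$ for $i\geq1$, and $C'\otimes_T(C')^{\dagger}\cong D$. Passing to the graded category — which requires knowing a semidualizing $T$-module can be taken graded — and taking a graded minimal free resolution $F_{\bullet}\to C'$, the complex $F_{\bullet}\otimes_T(C')^{\dagger}$ is acyclic with $H_0\cong D$; counting $\kappa$-dimensions in each internal degree yields $H_{C'}(u)\,H_{(C')^{\dagger}}(u)=H_D(u)\,H_T(u)$ as polynomials. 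Setting $u=1$ and using that Matlis duality over the artinian ring $T$ preserves length (so $\len_T((C')^{\dagger})=\len_T(C')$ and $\len_T(D)=\len_T(T)$) gives $\len_T(C')^2=\len_T(T)^2$, hence $\len_T(C')=\len_T(T)$. Specializing $T=R_P$, $C'=C_P$ and feeding this through the first two steps completes the proof. An alternative for this last step, bypassing gradability, is to polarize $J$ to a squarefree monomial ideal, realizing $R_P$ (after suitable localization and completion) as $\widetilde{R}/(\underline{z})$ for a regular sequence $\underline{z}$ on a reduced Cohen–Macaulay ring $\widetilde{R}$, then applying the generically Gorenstein case~\cite[(2.8(a))]{sather:divisor} to $\widetilde{R}$ and transporting the equality of multiplicities down $\underline{z}$ via Facts~\ref{fact201''} and~\ref{fact205} and the invariance of Hilbert–Samuel multiplicity modulo a regular sequence.

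The main obstacle is this artinian monomial case: proving $\len_T(C')=\len_T(T)$ for an arbitrary artinian monomial ring $T$. These $T$ need not be Gorenstein, and can even admit more than two semidualizing modules up to isomorphism (e.g.\ tensor products of copies of $k[y_1,y_2]/(y_1,y_2)^2$), so no classification of $\s_0(T)$ is available and one really must argue at the level of lengths. Each route has a technical cost — the Hilbert-series argument needs the passage to graded modules, and the polarization argument needs careful bookkeeping with completions and with the bijectivity clause of Fact~\ref{fact205} along $\underline{z}$. By contrast the additivity reduction and the structural identification of $R_P$ (where the reducedness of $A$ enters exactly to force $\fp S_P=0$) are routine.
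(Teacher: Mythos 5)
Your reduction via additivity to the minimal primes of $R$, and the identification of $R_Q$ (for $Q\in\Min(R)$) with an artinian monomial algebra $\kappa[y_1,\dots,y_{|\tau|}]/J$ over a field, are both correct — the reducedness of $A$ is used exactly where you say, and the Cohen–Macaulayness of $R_P$ legitimizes the additivity formula. But as you yourself flag, the proposal stops short of proving the one statement everything hinges on, namely $\len_T(C')=\len_T(T)$ for an artinian monomial algebra $T$, and neither of your two suggested completions is a proof as written. Route (a) founders on gradability: you have no way to replace an arbitrary semidualizing $T$-module by a graded one, and this is not a cosmetic issue — the paper itself is careful to count only \emph{graded} semidualizing modules in its graded statement (Corollary~\ref{cor301}) precisely because such a gradability claim is not available. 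Route (b) invokes an ``invariance of Hilbert--Samuel multiplicity modulo a regular sequence,'' which in the generality you state it is not a standard fact: the regular sequence is not contained in $J$, and the comparison you need downstairs is exactly the content of~\cite[(2.8(b))]{sather:divisor}, which is the result the paper actually cites; quoting only the generically Gorenstein case (2.8(a)) and asserting the descent does not close the argument.

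That said, route (b) is essentially the paper's method and can be completed cheaply in your zero-dimensional setting: polarize $J$, localize and complete to get a complete reduced Cohen--Macaulay local ring $\widetilde T$ with $T\cong\widetilde T/(\underline{z})$ for a $\widetilde T$-regular sequence $\underline{z}$, lift $C'$ to a semidualizing $\widetilde C$ using the bijectivity clause of Fact~\ref{fact205}, note that $\widetilde C$ is maximal Cohen--Macaulay and $(\underline{z})$ is a parameter ideal of $\widetilde T$, and then $\len_T(C')=\len_{\widetilde T}(\widetilde C/(\underline{z})\widetilde C)=e_{\widetilde T}((\underline{z});\widetilde C)=e_{\widetilde T}((\underline{z});\widetilde T)=\len_T(T)$, where the middle equality is~\cite[(2.8(a))]{sather:divisor} and the outer ones are the classical identity $e(\mathfrak{q};M)=\len(M/\mathfrak{q}M)$ for Cohen--Macaulay $M$ and parameter ideals $\mathfrak{q}$ \cite[(4.7.11)]{bruns:cmr}. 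The paper proceeds differently: it polarizes \emph{globally} over $S=A[\![x_1,\dots,x_n]\!]$ (so no additivity reduction and no explicit description of $R_P$ is needed), lifts $C$ to a semidualizing module over the complete reduced Cohen--Macaulay ring $R^*$, localizes at $P^*=\tau^{-1}(P)$, checks reducedness, equidimensionality and excellence there, and applies~\cite[(2.8(b))]{sather:divisor} once to get the multiplicity equality for every $P$ and every $PR_P$-primary $J$ directly. Your approach, once patched as above, trades that stronger citation for the additivity formula plus the parameter-ideal colength identity; but in the form submitted, the decisive step is missing.
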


\begin{proof}
Here is an outline of the proof.
We show that the theory of polarization for monomial ideals yields
a complete reduced Cohen-Macaulay local ring $R^*$ and a surjection
$\tau\colon R^*\to R$ such that $\ker(\tau)$ is generated by
an $R^*$-regular sequence $\y$. Facts~\ref{fact201''} and~\ref{fact205}
yield a semidualizing $R^*$-module such that the sequence $\y$ is
$C^*$-regular and $C^*/(\y)C^*\cong C$. Because $R^*$ is complete and reduced,
the desired conclusion follows from~\cite[(2.8.b)]{sather:divisor}.

Set $S_0=A[x_1,\ldots,x_n]\subset S$.
The ideal $I_0=I\cap S_0$ is generated by monomials in the $x_i$, in fact,
by the same list of monomial generators used to generate $I$.

The theory of polarization for monomial ideals yields the following:
\begin{enumerate}[\quad(1)]
\item
a polynomial ring $S_0^*=A[x_{1,1},\ldots, x_{1,t_1},x_{2,1},\ldots, x_{2,t_2},\ldots,x_{n,1},\ldots, x_{n,t_n}]$
with irrelevant maximal ideal
$\n_0^*=
(\fr,\{x_{i,j}\})S_0^*$,
\item
an ideal $I_0^*\subseteq S_0^*$ generated by \emph{square-free} monomials in
the $x_{i,j}$,
\item a sequence $\y=y_1,\ldots,y_r\in\n_0^*$
that is both $S_0^*$-regular and $(S_0^*/I_0^*)$-regular
and such that $S_0^*/(\y)S_0^*\cong S_0$ and
$S_0^*/(I_0^*+(\y)S_0^*)\cong S_0/I_0$.
\end{enumerate}
Localizing at $\n_0^*$ and passing to the completion yields the following:
\begin{enumerate}[\quad($1'$)]
\item
a power series ring $S^*=A[\![x_{1,1},\ldots, x_{1,t_1},x_{2,1},\ldots, x_{2,t_2},\ldots,x_{n,1},\ldots, x_{n,t_n}]\!]$
over $k$ with maximal ideal denoted
$\n^*=
(\fr,\{x_{i,j}\})S^*$,
\item
an ideal $I^*=I_0^*S^*\subseteq S^*$ generated by square-free monomials in
the $x_{i,j}$,
\item a sequence $\y=y_1,\ldots,y_r\in\n^*$
that is both $S^*$-regular and $(S^*/I^*)$-regular
and such that $S^*/(\y)S^*\cong S$ and
$S^*/(I^*+(\y)S^*)\cong S/I=R$.
\end{enumerate}
Setting $R^*=S^*/I^*$, we have the following:
\begin{enumerate}[\quad($1''$)]
\item
Since $I^*$ is generated by square-free monomials, the ring $R^*$ is reduced.
\item 
The sequence $\y$ is $R^*$-regular such that $R^*/(\y)R^*\cong R$.
In particular, since $R$ is Cohen-Macaulay, so is $R^*$.
\item
Since $S^*$ is complete, so is $R^*$. 
Thus, Fact~\ref{fact205} 
provides a semidualizing $R^*$-module $C^*$ such that $C\cong C^*\otimes_{R^*}R$.
\item
Since the sequence $\y$ is $R^*$-regular, it is also $C^*$-regular
by Fact~\ref{fact201''}.
\end{enumerate}
Let $\tau\colon R^*\to R$ be the canonical surjection, and set $P^*=\tau^{-1}(P)$.
We then have the following:
\begin{enumerate}[\quad($1'''$)]
\item 
Since $R^*$ is reduced, so is the localization $R^*_{P^*}$.
\item 
Since $R^*$ is Cohen-Macaulay, so is the localization $R^*_{P^*}$.
In particular, the ring $R^*_{P^*}$ is equidimensional.
Also, we have $R^*_{P^*}/(\y)R^*_{P^*}\cong R_P$.
\item
Since $R^*$ is complete, it is excellent, and it follows that 
the localization $R^*_{P^*}$ is also excellent. In particular, for every
$\p\in\Min(R^*_{P^*})$ the ring $(R^*_{P^*})_{\p}/\p(R^*_{P^*})_{\p}\otimes_{R^*_{P^*}}\widehat{R^*_{P^*}}$
is Gorenstein.
\item The $R^*_{P^*}$-module $C^*_{P^*}$ is semidualizing
and satisfies $C^*_{P^*}/(\y)C^*_{P^*}\cong C_P$.
\end{enumerate}
Using the conditions ($1'''$)---($4'''$),
the conclusion $e_{R_P}(J;C_P)=e_{R_P}(J;R_P)$ 
now follows from~\cite[(2.8.b)]{sather:divisor}.
\end{proof}

The next result contains case~\eqref{thm102d} of Theorem~\ref{thm102} from the introduction.
In preparation, recall that
a prime ideal $P$ in a local ring $R$
is \emph{analytically unramified} if the completion
$\widehat{R/P}$ is reduced. For example, if $R$ is excellent,
then every prime ideal of $R$ is analytically unramified.

\begin{thm} \label{lem204}
Let $(A,\fr)$ be a complete reduced local ring,
and $S=A[\![x_1,\ldots,x_n]\!]$ the formal power series ring,
with maximal ideal $\n=(\fr,x_1,\ldots,x_n)S$.
Let $I\subset S$ be an ideal generated by monomials in the $x_i$.
Let $R$ be a local Cohen-Macaulay ring
such that $\comp R\cong S/I$, and let $C$ be a  semidualizing $R$-module.
\begin{enumerate}[\quad\rm(a)]
\item \label{lem204a}
Let  $P\in\spec(R)$ be analytically unramified.
Then for every $PR_P$-primary ideal
$J\subset R_P$, we have $e_{R_P}(J;C_P)=e_{R_P}(J;R_P)$.
\item \label{lem204c}
For every $\m$-primary ideal
$J\subset R$, we have $e_{R}(J;C)=e_{R}(J;R)$.
\end{enumerate}
\end{thm}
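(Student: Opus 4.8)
The strategy is to reduce both parts to the already-established Lemma~\ref{lem204z}, using the standard tools for passing between $R$, its completion $\comp R$, and localizations. For part~\eqref{lem204c}, I would first invoke Fact~\ref{fact205} to produce a semidualizing $\comp R$-module, namely $\comp C=\comp R\otimes_RC\cong\comp C$, and Lemma~\ref{lem205} (applied to the faithfully flat local map $R\to\comp R$ with $\m\comp R=\comp\m$) to obtain the equalities $e_R(J;C)=e_{\comp R}(J\comp R;\comp C)$ and $e_R(J;R)=e_{\comp R}(J\comp R;\comp R)$. Thus it suffices to prove the multiplicity equality over $\comp R\cong S/I$, which is precisely the case $P=\m$ of Lemma~\ref{lem204z} (taking the $PR_P$-primary ideal there to be the $\m$-primary ideal $J\comp R$). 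Since a complete local ring is excellent, the maximal ideal is automatically analytically unramified, so part~\eqref{lem204c} is in fact a special case of part~\eqref{lem204a} once the latter is established; but it is cleanest to record~\eqref{lem204c} directly this way.

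For part~\eqref{lem204a}, the subtlety is that $P$ lives in $R$, not in $\comp R$, so the passage to the completion must be handled at the level of the localization $R_P$. I would proceed as follows. The flat local map $R_P\to\widehat{R_P}$ (completion at $PR_P$) satisfies $PR_P\cdot\widehat{R_P}=$ the maximal ideal, so Lemma~\ref{lem205} again reduces the desired equality $e_{R_P}(J;C_P)=e_{R_P}(J;R_P)$ to the corresponding equality over $\widehat{R_P}$ for the semidualizing module $\widehat{R_P}\otimes_{R_P}C_P$ (semidualizing by Fact~\ref{fact205}). Now the hypothesis that $P$ is analytically unramified says exactly that $\widehat{R_P}$ is reduced; and $\widehat{R_P}$ is a complete reduced local ring. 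The point is then to realize $\widehat{R_P}$ as a localization of a ring of the type covered by Lemma~\ref{lem204z}. Since $R_P$ and $(\comp R)_{\p}$ have isomorphic completions for a suitable prime $\p$ of $\comp R\cong S/I$ lying over $P$ (more precisely, $\widehat{R_P}\cong\widehat{(\comp R)_{\p}}$ for each minimal prime $\p$ of $P\comp R$ — one picks any such $\p$), and $(\comp R)_{\p}=(S/I)_{\p}$ is a localization of $S/I$ with $S=A[\![x_1,\ldots,x_n]\!]$, Lemma~\ref{lem204z} applied with the prime $\p\in\spec(S/I)$ gives $e_{(S/I)_{\p}}(-;C^{\dagger}_{\p})=e_{(S/I)_{\p}}(-;(S/I)_{\p})$ for the induced semidualizing module, and one more application of Lemma~\ref{lem205} along $(S/I)_{\p}\to\widehat{(S/I)_{\p}}\cong\widehat{R_P}$ transports this to $\widehat{R_P}$. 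Chaining the equalities back through the completion maps yields the claim.

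The main obstacle is the bookkeeping in part~\eqref{lem204a}: one must check that the semidualizing modules match up correctly under all of these flat base changes — that is, that $\widehat{R_P}\otimes_{R_P}C_P$, $\widehat{(\comp R)_{\p}}\otimes C_{\p}$, and the completion of the localization of $C^*$ appearing in Lemma~\ref{lem204z} are all the same $\widehat{R_P}$-module — and that Lemma~\ref{lem205} genuinely applies at each stage (each map is flat local with the maximal ideal extending to the maximal ideal). All the needed compatibilities are formal: base change of $\Hom$ and $\Ext$ along flat maps gives that $\vf$-pullbacks of semidualizing modules are semidualizing (Fact~\ref{fact205}), and the isomorphism $\widehat{R_P}\cong\widehat{(\comp R)_{\p}}$ follows from the fact that completion at an ideal commutes with localization in the appropriate sense together with $\comp R=\widehat R$. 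Once these identifications are in place, the proof is just a diagram of equalities of multiplicities, each instance of which is either Lemma~\ref{lem205} or Lemma~\ref{lem204z}.
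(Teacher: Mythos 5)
Your treatment of part (b) is fine: reducing along $R\to\comp R$ via Fact~\ref{fact205} and Lemma~\ref{lem205} and then quoting Lemma~\ref{lem204z} at the maximal ideal is essentially what the paper does (the paper instead notes $\widehat{R/\m}=R/\m$ is a field, so $\m$ is analytically unramified, and quotes part (a); your appeal to excellence of $R$ is unnecessary and not actually available, since only $\comp R$ is known to be complete, but the conclusion is trivially true anyway). Part (a), however, has a genuine gap, in two places. First, you misread the hypothesis: ``$P$ analytically unramified'' means, by the definition given just before the theorem, that $\widehat{R/P}\cong\comp R/P\comp R$ is reduced -- not that $\widehat{R_P}$ is reduced. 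Second, and more seriously, your pivotal claim that $\widehat{R_P}\cong\widehat{(\comp R)_{\p}}$ for a minimal prime $\p$ of $P\comp R$ is false in general, because the formal fibre $\comp R_{\p}/P\comp R_{\p}$ is typically a proper field extension of the residue field of $R_P$. A concrete instance within the scope of the theorem: $R=(k[x,y]/(xy))_{(x,y)}$, so $\comp R=k[\![x,y]\!]/(xy)$, and $P=(x)R$; then $P$ is analytically unramified, $R_P\cong k(y)$, while $\comp R_{\p}\cong k(\!(y)\!)$ is already complete, so $\widehat{R_P}\not\cong\widehat{\comp R_{\p}}$. With this isomorphism gone, your chain of applications of Lemma~\ref{lem205} through $\widehat{R_P}$ does not close up, so the equality over $\widehat{R_P}$ is never established.

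The repair is to bypass $\widehat{R_P}$ entirely, which is how the paper argues. Choose $\wti P\in\spec(\comp R)$ lying over $P$ and minimal over $P\comp R$. Because $\comp R/P\comp R=\widehat{R/P}$ is reduced, its localization at $\wti P/P\comp R$ is a field, i.e.\ $P\comp R_{\wti P}=\wti P\comp R_{\wti P}$; thus the flat local map $R_P\to\comp R_{\wti P}$ satisfies the hypothesis ``$\m S=\n$'' of Lemma~\ref{lem205}. Applying Lemma~\ref{lem205} to this map (for both $C_P$ and $R_P$) reduces the claim to the equality $e_{\comp R_{\wti P}}(J\comp R_{\wti P};\comp R_{\wti P}\otimes_{R_P}C_P)=e_{\comp R_{\wti P}}(J\comp R_{\wti P};\comp R_{\wti P})$, and since $\comp R_{\wti P}\otimes_{R_P}C_P\cong(\comp C)_{\wti P}$ with $\comp C$ semidualizing over $\comp R\cong S/I$ (Fact~\ref{fact205}), this is exactly the case of Lemma~\ref{lem204z} at the prime $\wti P$. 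Note that in this argument no completion of $R_P$ ever appears, and the reducedness needed for Lemma~\ref{lem204z} is supplied by the polarized ring there, not by $\widehat{R_P}$; the only role of analytic unramifiedness is to make the closed fibre of $R_P\to\comp R_{\wti P}$ a field so that Lemma~\ref{lem205} applies.
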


\begin{proof}
\eqref{lem204a}
Since the natural map $R\to\comp R$ is flat and local, 
there is a prime $\wti{P}\in\spec(\comp{R})$ such that $P=\wti{P}\cap R$
and that the induced map $R_P\to \comp{R}_{\wti{P}}$ is flat and local.

The $R_P$-module $C_P$ is semidualizing.
Furthermore, by flat base-change, the $\comp{R}_{\wti{P}}$-module
$\comp{R}_{\wti{P}}\otimes_{R_P}C_P$ is semidualizing.
The fact that $P$ is analytically unramified implies that
the maximal ideal of $R_P$ extends to the maximal ideal of $\comp{R}_{\wti{P}}$.
Hence, Lemma~\ref{lem205} yields the first and third equalities in the following sequence:
\begin{align*}
e_{R_P}^{}(J;C_P)
=e^{}_{\comp{R}_{\wti{P}}}(J\comp{R}_{\wti{P}};\comp{R}_{\wti{P}}\otimes_{R_P}C_P)
=e^{}_{\comp{R}_{\wti{P}}}(J\comp{R}_{\wti{P}};\comp{R}_{\wti{P}})
=e_{R_P}^{}(J;R_P).
\end{align*}
The second equality is from Lemma~\ref{lem204z}.

\eqref{lem204c}
Since $R/\m$ is a field, it is complete.
Hence, the prime ideal $\m$ is analytically unramified,
so the desired conclusion follows from part~\eqref{lem204a}.
\end{proof}

\begin{cor} \label{lem204y}
Let $(S,\n)$ be a regular local ring containing a field, 
and let $\x=x_1,\ldots,x_n$ be a regular system of parameters for $S$.
Let $I\subset S$ be an ideal generated by monomials in the $x_i$,
and set $R=S/I$ with maximal ideal $\m=\n/I$.
Assume that $R$ is Cohen-Macaulay, and let $C$ be a  semidualizing $R$-module.
For every $P\in\ass(R)$, we have $\len_{R_P}(C_P)=\len_{R_P}(R_P)$.
\end{cor}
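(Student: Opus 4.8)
The plan is to deduce Corollary~\ref{lem204y} from Theorem~\ref{lem204}\eqref{lem204a} by checking that its hypotheses apply to the ring $R=S/I$ and each associated prime $P\in\ass(R)$. First I would observe that since $S$ is regular local containing a field and $\x$ is a regular system of parameters, the completion $\comp S$ is a formal power series ring $k[\![x_1,\ldots,x_n]\!]$ over a field $k$ (namely a coefficient field of $S$), and the monomial ideal $I\subseteq S$ extends to a monomial ideal $\comp I=I\comp S$ in $\comp S$. Thus $\comp R\cong \comp S/I\comp S$ has exactly the shape demanded by Theorem~\ref{lem204}, with $A=k$ (a complete reduced local ring) and no $\fr$ in the notation. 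Since $R$ is assumed Cohen-Macaulay, Theorem~\ref{lem204}\eqref{lem204a} is available for every analytically unramified prime of $R$.

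Next I would verify that each $P\in\ass(R)$ is analytically unramified, i.e.\ that $\comp{R/P}$ is reduced. The key point is that $R$ is a localization of a finitely generated algebra over a field, hence excellent (alternatively, one notes that $R$ is a quotient of the excellent regular ring $S$; a regular local ring containing a field is excellent), and therefore every prime of $R$ is analytically unramified, as recalled in the text preceding Theorem~\ref{lem204}. Actually one can say more here: because $I$ is a monomial ideal in a regular system of parameters, its minimal primes are themselves generated by subsets of the $x_i$, so each minimal prime $P$ has $R/P$ regular and in particular $\comp{R/P}$ a power series ring, which is certainly reduced. Since $R$ is Cohen-Macaulay, $\ass(R)=\Min(R)$, so this handles every associated prime.

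Then for a fixed $P\in\ass(R)$, I would apply Theorem~\ref{lem204}\eqref{lem204a} with $J=PR_P$, which is $PR_P$-primary (indeed it is the maximal ideal of $R_P$), to get
\begin{equation*}
e_{R_P}(PR_P;C_P)=e_{R_P}(PR_P;R_P).
\end{equation*}
Finally I would translate these multiplicities into lengths: since $P\in\ass(R)$ and $R$ is Cohen-Macaulay (equidimensional), $R_P$ is a zero-dimensional local ring, so $\len_{R_P}(R_P)<\infty$, and for any finitely generated $R_P$-module $M$ of finite length one has $e_{R_P}(PR_P;M)=\len_{R_P}(M)$ by the definition of Hilbert--Samuel multiplicity in the dimension-zero case (Remark/Definition~\ref{disc202}). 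Applying this to $M=C_P$ and $M=R_P$ gives $\len_{R_P}(C_P)=\len_{R_P}(R_P)$, as desired. The only mild obstacle is bookkeeping the passage from $S$ to $\comp S$ and confirming that $I\comp S$ is still a monomial ideal in the obvious coordinates so that Theorem~\ref{lem204} genuinely applies; everything else is a routine unwinding of definitions.
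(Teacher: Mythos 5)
Your skeleton is the same as the paper's: identify $\comp R\cong k[\![x_1,\ldots,x_n]\!]/Ik[\![x_1,\ldots,x_n]\!]$ (with $A=k$ a coefficient field, via Cohen's structure theorem), reduce everything to showing each $P\in\ass(R)$ is analytically unramified, apply Theorem~\ref{lem204}\eqref{lem204a} with $J=PR_P$, and use $\ass(R)=\Min(R)$ (Cohen--Macaulayness) so that $R_P$ is artinian and $e_{R_P}(PR_P;-)=\len_{R_P}(-)$. The gap is in the analytic unramifiedness step. Your first justification is not available: $S$ is an arbitrary regular local ring containing a field, not (a localization of) a finitely generated algebra over a field, and regular local rings containing a field need not be excellent (there exist non-excellent DVRs in characteristic $p$), so you cannot simply quote excellence of $R$. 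Your fallback --- that the minimal primes of $I$ are generated by subsets of $x_1,\ldots,x_n$, hence $R/P$ is regular and $\comp{R/P}$ is a domain --- is exactly the paper's argument, but you assert its key input as if it were obvious. Since $S$ is not a polynomial ring (there is no grading and the $x_i$ are not indeterminates), the standard combinatorial primary-decomposition theory of monomial ideals does not apply verbatim; this is precisely the point the paper spends most of its proof justifying, by invoking the parametric decomposition theorem of Heinzer, Mirbagheri, Ratliff, and Shah \cite[(4.10)]{heinzer:pdmiii} (valid because $\x$ is a regular sequence in the Jacobson radical of $S$) to write $I=\bigcap_{s=1}^{u}(x_1^{e_{s,1}},\ldots,x_n^{e_{s,n}})S$, and then computing radicals to conclude that every minimal prime of $I$ has the form $(x_{i_1},\ldots,x_{i_j})S$.

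So the proof as written is incomplete at its crux: you need either the citation above, or some substitute argument (for instance, a faithfully flat descent/completion argument comparing the minimal primes of $I$ with those of the corresponding monomial ideal in an honest polynomial or power series ring, where the combinatorial statement is standard). Once that claim is in place, the remaining steps --- $R/P\cong S/(x_{i_1},\ldots,x_{i_j})S$ regular, hence $\comp{R/P}$ a domain and $P$ analytically unramified, then the multiplicity identity from Theorem~\ref{lem204}\eqref{lem204a} and the length translation in dimension zero --- are correct and coincide with the paper's proof.
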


\begin{proof}
Since $R$ is Cohen-Macaulay, we have $P\in\Min(R)$.
This explains the first and third equalities in the next sequence:
$$\len_{R_P}(C_P)
=e_{R_P}^{}(PR_P;C_P)
=e_{R_P}^{}(PR_P;R_P)
=\len_{R_P}(R_P).$$
For the second equality, it suffices to show that $P$ is analytically unramified;
then the equality follows from Theorem~\ref{lem204}\eqref{lem204a}.

Since $I$ is generated by monomials in the $x_i$, the associated prime $P$ 
has the form
$P=(x_{i_1},\ldots,x_{i_j})R$. 
This is, of course, standard when $S$ is a polynomial ring.
Since $S$ is not a polynomial ring, we justify this statement.
First note that each ideal 
$(x_{i_1},\ldots,x_{i_j})R$ is prime because the sequence
$\x$ is a regular system of parameters.
Since $R=S/I$ is Cohen-Macaulay, the prime $P$ is minimal in $\spec(R)$.
Let $\pi\colon S\to R$ be the canonical surjection, and set $Q=\pi^{-1}(P)$.
The prime $Q$ is a minimal prime for any primary decomposition of $I$,
and it follows that $Q$ is a minimal prime for any primary decomposition of the
radical $\sqrt{I}$.

Because the sequence $\x$ is regular and contained in the Jacobson radical of $S$,
a result of Heinzer, Mirbagheri, Ratliff, and Shah~\cite[(4.10)]{heinzer:pdmiii}
implies that there are non-negative integers
$u,e_{1,1},\ldots,e_{1,n},e_{2,1},\ldots,e_{2,n},\ldots,e_{u,1},\ldots,e_{u,n}$ such that
$$I=\cap_{s=1}^{u}(x_1^{e_{s,1}},\ldots,x_n^{e_{s,n}})S.$$
Since each ideal 
$(x_{i_1},\ldots,x_{i_j})S$ is prime,
it is straightforward to show that one has
$\sqrt{(x_1^{e_{s,1}},\ldots,x_n^{e_{s,n}})S}
=(x_{i_1},\ldots,x_{i_j})S$
and hence
\begin{equation} 
\label{lem204e}
\sqrt{I}=\cap_{s=1}^{u}(x_1^{\epsilon_{s,1}},\ldots,x_n^{\epsilon_{s,n}})S
\end{equation}
where 
$$\epsilon_{s,i}=\begin{cases}
0 & \text{if $e_{s,i}=0$} \\
1 & \text{if $e_{s,i}\neq 0$.} 
\end{cases}
$$
Since each $(x_1^{\epsilon_{s,1}},\ldots,x_n^{\epsilon_{s,n}})S$ is prime,
the intersection~\eqref{lem204e} is a primary decomposition.
It follows that $P=(x_1^{\epsilon_{s,1}},\ldots,x_n^{\epsilon_{s,n}})S$
for some index $s$, so $P$ has the desired form.

It follows that 
$R/P\cong S/(x_{i_1},\ldots,x_{i_j})S$ is a regular local ring.
Thus, the completion $\widehat{R/P}$ is also a regular local ring.
In particular, the ring $\widehat{R/P}$ is an integral domain, so it is reduced,
and $P$ is analytically unramified by definition.
\end{proof}


We conclude with three results relating lengths and multiplicities to
Betti numbers of semidualizing modules, starting with a general result
for modules of infinite projective dimension.

\begin{lem} \label{lem201}
Let 
$R$ be a local ring such that $\ass(R)=\Min(R)$. Let 
$C$ be a finitely generated $R$-module of infinite projective dimension,
and consider an exact sequence
$$R^{a_1}\xra{\partial} R^{a_0}\to C\to 0.$$
Assume that for each $P\in\ass(R)$ 
one has $\len_{R_P}(C_P)\leq\len_{R_P}(R_P)$.
Then $a_1\geq a_0$.
\end{lem}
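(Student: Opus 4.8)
The plan is to reduce to a localization argument at the minimal primes of $R$ and then use a rank count over a product of fields. Since $\pd_R(C)=\infty$, the map $\partial$ is not a split injection, but that alone does not give $a_1\geq a_0$; the extra hypothesis on lengths is what does the work. The main idea: localize the presentation $R^{a_1}\xra{\partial}R^{a_0}\to C\to 0$ at a minimal prime $P$ and count lengths.

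\textbf{Step 1: Localize at the minimal primes.} For each $P\in\ass(R)=\Min(R)$, the ring $R_P$ is artinian local, so $\len_{R_P}$ is additive on short exact sequences of finite-length modules. Localizing the presentation at $P$ gives an exact sequence $R_P^{a_1}\xra{\partial_P}R_P^{a_0}\to C_P\to 0$, hence $\len_{R_P}(C_P)=a_0\len_{R_P}(R_P)-\len_{R_P}(\im\partial_P)$. From the hypothesis $\len_{R_P}(C_P)\leq\len_{R_P}(R_P)$ we get $\len_{R_P}(\im\partial_P)\geq (a_0-1)\len_{R_P}(R_P)$, and trivially $\len_{R_P}(\im\partial_P)\leq a_1\len_{R_P}(R_P)$ since it is a quotient of $R_P^{a_1}$. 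Dividing by the positive integer $\len_{R_P}(R_P)$ yields $a_1\geq a_0-1$.

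\textbf{Step 2: Upgrade $a_1\geq a_0-1$ to $a_1\geq a_0$.} This is the crux, and the place where $\pd_R(C)=\infty$ must enter. Suppose for contradiction that $a_1=a_0-1$. Then the inequalities in Step 1 are equalities for \emph{every} $P\in\Min(R)$: we must have $\len_{R_P}(\im\partial_P)=a_1\len_{R_P}(R_P)$, which forces $\partial_P$ to be \emph{injective} for each minimal prime $P$. Equivalently, after localizing at the total ring of fractions (or at the semisimple ring $\prod_{P\in\Min(R)}R_P$), the map $\partial$ becomes injective. But a presentation $R^{a_0-1}\xra{\partial}R^{a_0}\to C\to 0$ with $\partial$ injective after localizing at all associated primes means $\ker(\partial)$ has no associated primes, hence $\ker(\partial)=0$, so $\partial$ itself is injective. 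Then $0\to R^{a_0-1}\xra{\partial}R^{a_0}\to C\to 0$ is a finite free resolution of $C$, so $\pd_R(C)\leq 1<\infty$, contradicting the hypothesis that $\pd_R(C)=\infty$. Therefore $a_1\geq a_0$.

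\textbf{Expected main obstacle.} The delicate point is the passage in Step 2 from ``$\partial_P$ injective for all $P\in\Min(R)=\ass(R)$'' to ``$\partial$ injective''. The submodule $\ker(\partial)\subseteq R^{a_1}$ satisfies $\ker(\partial)_P=\ker(\partial_P)=0$ for all $P\in\ass(R)$; since the associated primes of the submodule $\ker(\partial)$ are among those of $R^{a_1}$, which are exactly $\ass(R)$, and a finitely generated module vanishes iff it vanishes after localizing at each of its associated primes, we conclude $\ker(\partial)=0$. (This is where the standing hypothesis $\ass(R)=\Min(R)$ is used in an essential way, ensuring $\ass(R^{a_1})=\ass(R)$ consists exactly of the minimal primes we controlled.) Everything else is bookkeeping with additivity of length over an artinian local ring.
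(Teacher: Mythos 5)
Your proof is correct, and its skeleton matches the paper's: localize the presentation at the primes $P\in\ass(R)=\Min(R)$, where $R_P$ is artinian local, count lengths using $\len_{R_P}(C_P)\leq\len_{R_P}(R_P)$, and contradict $\pd_R(C)=\infty$ by producing a finite free resolution. The endgame differs, though. In the extremal case $a_1=a_0-1$ you show $\partial_P$ is injective for every $P\in\ass(R)$ and conclude $\Ker(\partial)=0$ outright, since $\ass(\Ker(\partial))\subseteq\ass(R^{a_1})=\ass(R)$ and a nonzero finitely generated module is nonzero after localizing at one of its associated primes; this gives $\pd_R(C)\leq 1$ immediately. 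The paper, assuming $a_1<a_0$, likewise finds $\Ker(\partial)_P=0$ for all $P\in\ass(R)$, but then works with $L=\im(\partial)$: it deduces $L_P\cong R_P^{a_1}$, hence $\rank_R(L)=a_1=\mu_R(L)$, and invokes the criterion that a finitely generated module whose minimal number of generators equals its rank is free (citing \cite[(1.12)]{sather:ecmfgd}) to obtain $0\to L\to R^{a_0}\to C\to 0$. Your route is slightly more elementary and self-contained, avoiding that cited freeness criterion; the paper's detour buys nothing essential here beyond reusing an existing lemma, and indeed the associated-primes argument you use would shorten the paper's own proof. Two harmless slips in your framing sentences: for $P\in\Min(R)$ the ring $R_P$ is artinian local but need not be a field, so there is no ``rank count over a product of fields'' and $\prod_{P\in\Min(R)}R_P$ need not be semisimple; but these asides play no role, since the argument you actually run uses length counts over the artinian local rings $R_P$, which is exactly what is needed.
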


\begin{proof}
Suppose that $a_1<a_0$, that is, that $a_1-a_0+1\leq 0$.
Set $K=\Ker(\partial)$ and consider the exact sequence
\begin{equation*} 
0\to K\to R^{a_1}\xra{\partial} R^{a_0}\to C\to 0.
\end{equation*}
Localize this sequence at an arbitrary $P\in\ass(R)$,
and count lengths to find that
$$0\leq\len_{R_P}(K_P)\leq(a_1-a_0+1)\len_{R_P}(R_P)\leq 0.$$
It follows that $K_P=0$ for all $P\in\ass(R)$.

Set $L=\im(\partial)$ and localize the exact sequence
$$0\to K \to R^{a_1}\xra\tau L\to 0$$
to conclude that $L_P\cong R_P^{a_1}$ for each $P\in\ass(R)$.
That is, the $R$-module $L$ has rank $a_1$.
Hence, the third step in the next sequence:
$$
a_1\geq\mu_R(L)
\geq\rank_R(L)=a_1.$$
The first step is from the surjection $\tau$.
It follows that $\mu_R(L)=\rank_R(L)$,
hence we conclude that $L$ is free; see, e.g., \cite[(1.12)]{sather:ecmfgd}.
The exact sequence
$$0\to L\to R^{a_0}\to C\to 0$$
implies that $\pd_R(C)$ is finite, a contradiction.
So, we have $a_1\geq a_0$, as desired.
\end{proof}

\begin{thm} \label{prop201}
Let 
$(R,\m)$ be a local ring such that $\ass(R)=\Min(R)$. Let 
$C$ be a semidualizing $R$-module such that $C\not\cong R$,
and consider an exact sequence
$$R^{a_1}\xra{\partial} R^{a_0}\to C\to 0.$$
For each $P\in\ass(R)$, assume that one of the following conditions holds:
\begin{enumerate}[\quad\rm(1)]
\item\label{prop201a}
$R_P$ is Gorenstein;
\item\label{prop201b}
$P^2R_P=0$; 
\item\label{prop201c}
$R_P\cong S/\n^e$ for some regular local ring $(S,\n)$ containing a field and some integer $e\geq 1$; or
\item\label{prop201d}
$R_P$ is isomorphic to a localization of a Cohen-Macaulay ring of the form 
$S/I$ where $S$ is a regular local ring containing a field with 
$x_1,\ldots,x_n\in S$ 
a regular system of parameters for $S$ such that $I$ is generated by monomials in the $x_i$.
\end{enumerate}
Then $\len_{R_P}(C_P)=\len_{R_P}(R_P)$
for each $P\in\ass(R)$. It follows that $a_1\geq a_0$
and that $e(J;C)=e(J;R)$ for each $\m$-primary ideal $J$.
\end{thm}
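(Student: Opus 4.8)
The plan is to reduce the statement to the local length identity $\len_{R_P}(C_P)=\len_{R_P}(R_P)$ for every $P\in\ass(R)$, to prove that identity by a four-way case analysis, and then to extract the two remaining assertions from Lemma~\ref{lem201} and the additivity formula for Hilbert-Samuel multiplicities.

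First I would fix $P\in\ass(R)=\Min(R)$, note that $R_P$ is then artinian and that $C_P$ is a semidualizing $R_P$-module (semidualizing modules localize along the flat map $R\to R_P$; see Fact~\ref{fact205}), so that all lengths in sight are finite. Now I run through the four cases. In case~\eqref{prop201a}, Fact~\ref{fact202} gives $C_P\cong R_P$ directly. In case~\eqref{prop201b} one has $\m_{R_P}^2=(PR_P)^2=P^2R_P=0$, so Fact~\ref{fact202} gives that $C_P\cong R_P$ or $C_P$ is dualizing for $R_P$; since $R_P$ is artinian, the second alternative still yields the length identity, via the well-known equality $\len_{R_P}(D)=\len_{R_P}(R_P)$ for a dualizing $R_P$-module $D$. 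Case~\eqref{prop201c} is exactly the hypothesis of Lemma~\ref{lem202} applied to $R_P$, which delivers the length identity outright. For case~\eqref{prop201d}, write $R_P\cong T_Q$ with $T=S/I$ a Cohen-Macaulay monomial quotient as in the statement; since $R_P$ is artinian, $Q$ is a minimal prime of $T$, and the structure theorem for monomial ideals used in the proof of Corollary~\ref{lem204y} shows that the preimage of $Q$ in $S$ has the form $(x_{i_1},\dots,x_{i_j})S$, so that $T_Q$ is itself isomorphic to a Cohen-Macaulay quotient of the regular local ring $S_{(x_{i_1},\dots,x_{i_j})S}$ by an ideal generated by monomials in the regular system of parameters $x_{i_1},\dots,x_{i_j}$. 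As $R_P$ is artinian local, its only associated prime is its maximal ideal, so Corollary~\ref{lem204y}, now applied to $R_P$ itself, gives the length identity.

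Granting the length identity, the remaining two assertions are formal. Since $C$ is semidualizing with $C\not\cong R$, Fact~\ref{fact202} precludes finite projective dimension, so $\pd_R(C)=\infty$; hence Lemma~\ref{lem201}, whose hypothesis $\len_{R_P}(C_P)\leq\len_{R_P}(R_P)$ now holds with equality, gives $a_1\geq a_0$. For the multiplicity statement, the isomorphism $\Hom CC\cong R$ forces $\ann_R(C)=0$, hence $\supp_R(C)=\spec R$ and $\dim_R(C)=\dim R$, so the additivity formula~\cite[(4.7.t)]{bruns:cmr} expresses $e(J;C)$ and $e(J;R)$ as the sums $\sum_P\len_{R_P}(C_P)\,e(J;R/P)$ and $\sum_P\len_{R_P}(R_P)\,e(J;R/P)$ over $P\in\ass(R)$; these agree term by term by the length identity, so $e(J;C)=e(J;R)$.

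The step I expect to be the main obstacle is case~\eqref{prop201d}. One cannot simply quote Corollary~\ref{lem204y} for the big ring $T$, because the semidualizing $R_P$-module $C_P$ need not be extended from a semidualizing $T$-module. The point is rather that a localization of a monomial quotient at a minimal prime is again a monomial quotient of a regular local ring containing a field, so that Corollary~\ref{lem204y} applies directly to $R_P$; making this reduction precise---checking that the localized ideal remains generated by monomials with respect to a regular system of parameters---is the one genuinely non-formal point in the argument.
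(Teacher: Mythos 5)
Your proof is correct and follows essentially the same route as the paper: Fact~\ref{fact202}, Lemma~\ref{lem202}, and Corollary~\ref{lem204y} for the length identity at each $P\in\ass(R)$, then Lemma~\ref{lem201} for $a_1\geq a_0$ and the additivity formula for the multiplicities. The reduction you flag in case~(4)---that a localization of a Cohen--Macaulay monomial quotient at a minimal prime is again a Cohen--Macaulay quotient of a regular local ring containing a field by an ideal generated by monomials in a regular system of parameters, so that Corollary~\ref{lem204y} can be applied to $R_P$ itself rather than to a module extended from $S/I$---is precisely the detail the paper leaves implicit when it simply cites Corollary~\ref{lem204y}, and your handling of it is correct.
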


\begin{proof}
We first show that
$\len_{R_P}(C_P)=\len_{R_P}(R_P)$
for each $P\in\ass(R)$.
If $P$ satisfies condition~\eqref{prop201a} or~\eqref{prop201b}, this 
is a consequence of Fact~\ref{fact202}.
Under conditions~\eqref{prop201c} and~\eqref{prop201d}, 
we apply Lemma~\ref{lem202} and Corollary~\ref{lem204y}, respectively.

Now, the conclusion $a_1\geq a_0$ follows from
Lemma~\ref{lem201}, since Fact~\ref{fact202} implies that $\pd_R(C)=\infty$. The equality
$e(J;C)=e(J;R)$ for each $\m$-primary ideal $J$
follows from the additivity formula as in the proof of Theorem~\ref{thm201}.
\end{proof}

The next result shows how the existence of a non-trivial
semidualizing module yields an affirmative answer to~\cite[(2.6)]{jorgensen:gbscm}.

\begin{cor} \label{cor401}
Let 
$R$ be a Cohen-Macaulay local ring with a dualizing module $D$. Let 
$C$ be a semidualizing $R$-module such that $D\not\cong C\not\cong R$.
If for each $P\in\ass(R)$ one of the conditions~\eqref{prop201a}--\eqref{prop201d} 
from Theorem~\ref{prop201}
holds, then $\beta^R_1(D)\geq 2\beta^R_0(D)$.
\end{cor}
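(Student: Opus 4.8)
The plan is to combine the length computation from Theorem~\ref{prop201} with the Betti number formula from Fact~\ref{fact201}\eqref{fact201g}. First I would observe that since $R$ is Cohen-Macaulay with dualizing module $D$, it is in particular equidimensional, so $\ass(R) = \Min(R)$, which means Theorem~\ref{prop201} applies. Moreover, since $D \not\cong C \not\cong R$, the module $C$ is a semidualizing module that is not free, so Theorem~\ref{prop201} (under the hypothesis that each $P \in \ass(R)$ satisfies one of \eqref{prop201a}--\eqref{prop201d}) gives us, for any exact sequence $R^{a_1} \xra{\partial} R^{a_0} \to C \to 0$, the inequality $a_1 \geq a_0$. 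Taking $\partial$ to be the first map in a minimal free resolution of $C$, this says $\beta^R_1(C) \geq \beta^R_0(C)$.

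Next I would run the same argument with $C$ replaced by $C^\dagger := \Hom{C}{D}$. By Fact~\ref{fact201}\eqref{fact201b}, $C^\dagger$ is again a semidualizing $R$-module, and by Fact~\ref{fact201}\eqref{fact201f}, since $C \not\cong D$ we have $C^\dagger \not\cong R$. So Theorem~\ref{prop201} applies to $C^\dagger$ as well, yielding $\beta^R_1(C^\dagger) \geq \beta^R_0(C^\dagger)$.

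Then I would invoke the Betti number convolution formula of Fact~\ref{fact201}\eqref{fact201g}: $\beta^R_i(D) = \sum_{j=0}^i \beta^R_j(C)\beta^R_{i-j}(C^\dagger)$. For $i = 0$ this gives $\beta^R_0(D) = \beta^R_0(C)\beta^R_0(C^\dagger)$, and for $i = 1$ it gives $\beta^R_1(D) = \beta^R_1(C)\beta^R_0(C^\dagger) + \beta^R_0(C)\beta^R_1(C^\dagger)$. Using the two inequalities $\beta^R_1(C) \geq \beta^R_0(C)$ and $\beta^R_1(C^\dagger) \geq \beta^R_0(C^\dagger)$, I would bound
\begin{equation*}
\beta^R_1(D) \geq \beta^R_0(C)\beta^R_0(C^\dagger) + \beta^R_0(C)\beta^R_0(C^\dagger) = 2\beta^R_0(D),
\end{equation*}
which is the desired conclusion. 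I do not anticipate a serious obstacle here; the only point requiring a little care is confirming that all the hypotheses of Theorem~\ref{prop201} and Fact~\ref{fact201} are genuinely in force for both $C$ and $C^\dagger$ — in particular that $C^\dagger \not\cong R$, which is exactly where the hypothesis $C \not\cong D$ is used, via Fact~\ref{fact201}\eqref{fact201f} — and noting that the associated-prime conditions \eqref{prop201a}--\eqref{prop201d} are hypotheses on $R$ alone, hence apply equally to the resolution of $C^\dagger$.
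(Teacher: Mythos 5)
Your argument is correct and is essentially identical to the paper's proof: set $C^{\dagger}=\Hom{C}{D}$, use Fact~\ref{fact201}(f) to get $C^{\dagger}\not\cong R$ from $C\not\cong D$, apply Theorem~\ref{prop201} to both $C$ and $C^{\dagger}$, and conclude via the convolution formula of Fact~\ref{fact201}(g). The only cosmetic remark is that $\ass(R)=\Min(R)$ holds because a Cohen-Macaulay local ring has no embedded primes (unmixedness), rather than because it is equidimensional, but this does not affect the argument.
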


\begin{proof}
Set $\cd=\Hom CD$.
The condition $D\not\cong C$ implies that $\cd\not\cong R$ by
Fact~\ref{fact201}(f).
Hence, Theorem~\ref{prop201} implies that 
$\beta^R_1(\cd)\geq\beta^R_0(\cd)$
and
$\beta^R_1(C)\geq\beta^R_0(C)$.
This explains the second step in the next sequence:
\begin{align*}
\beta^R_1(D)
&=\beta^R_1(C)\beta^R_0(\cd)+\beta^R_0(C)\beta^R_1(\cd)
\geq 2\beta^R_0(C)\beta^R_0(\cd)
= 2\beta^R_0(D).
\end{align*}
The first and third steps follow from  Fact~\ref{fact201}(g).
\end{proof}

\section*{Acknowledgments}

We are grateful to C\u{a}t\u{a}lin Ciuperc\u{a} for helpful discussions about this material.

\providecommand{\bysame}{\leavevmode\hbox to3em{\hrulefill}\thinspace}
\providecommand{\MR}{\relax\ifhmode\unskip\space\fi MR }
\providecommand{\MRhref}[2]{%
  \href{http://www.ams.org/mathscinet-getitem?mr=#1}{#2}
}
\providecommand{\href}[2]{#2}

\end{document}